\date{March 11, 2019}
\newtheorem{theorem}{Theorem}[section]
\newtheorem{lemma}[theorem]{Lemma}
\newtheorem{proposition}[theorem]{Proposition}
\newtheorem{corollary}[theorem]{Corollary}
\newtheorem{definition}[theorem]{Definition}
\theoremstyle{remark}
\newcommand{\dd}{\mathrm{d}}
\newcommand{\Res}{\text{Res}}
\newcommand{\Div}{\operatorname{Div}}
\newcommand{\cO}{{\mathcal O}}
\newcommand{\CC}{{\mathbb C}}
\newcommand{\NN}{{\mathbb N}}
\newcommand{\RR}{{\mathbb R}}
\newcommand{\ZZ}{{\mathbb Z}}
\renewcommand{\Re}{\operatorname{Re}}
\renewcommand{\Im}{\operatorname{Im}}
\newcommand{\G}{\Gamma}
\title{On the definition of Euler Gamma function}
\subjclass[2010]{Primary: 33B15. Secondary: 30D10, 30D15, 30D30, 39A20.}
\keywords{Euler Gamma function}
\author[R. P\'{e}rez-Marco]{Ricardo P\'{e}rez-Marco}
\address{Ricardo P\'erez-Marco\newline 
\indent  Institut de Math\'ematiques de Jussieu-Paris Rive Gauche, \newline
\indent CNRS, UMR 7586, \newline
\indent  Universit\'e de Paris, B\^at. Sophie Germain, \newline 
\indent 75205 Paris, France}
\email{ricardo.perez-marco@imj-prg.fr}
\begin{document}

\begin{abstract}
We present a new definition of Euler Gamma function. From the complex analysis and transalgebraic viewpoint, 
it is a natural characterization in the space of finite order meromorphic functions. 
We show how the classical theory and formulas develop naturally, and we discuss the relation 
with other definitions. We show in a companion article, that this definition generalizes 
to higher gamma functions and provides an unifying framework for their definition which is more natural 
than the usual Bohr-Mollerup or Lerch approaches.
\end{abstract}

\maketitle

\section{Introduction}

Our first result is a new characterization and  definition of Euler Gamma function. We denote the right half 
complex plane by $\CC_+=\{s\in \CC ; \Re s >0\}$.

\begin{theorem}\label{thm:main-Euler-gamma}
There is one and only one finite order meromorphic function $\Gamma(s)$, $s\in \CC$, 
without zeros nor poles in $\CC_+$, such that $\Gamma(1)=1$, 
$\Gamma'(1)\in \RR$,  and which satisfies the functional equation
 $$
 \Gamma(s+1)=s\, \Gamma(s)
 $$
\end{theorem}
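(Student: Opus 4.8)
The plan is to separate existence from uniqueness, the latter being the real content. For existence I would just verify that the classical Euler Gamma function qualifies: it is meromorphic of order $1$, hence of finite order; it has no zeros at all and its only poles are at the non-positive integers, so none lie in $\CC_+$; and $\Gamma(1)=1$, $\Gamma'(1)=-\gamma\in\RR$, while $\Gamma(s+1)=s\,\Gamma(s)$ is classical. So everything reduces to showing that any finite-order meromorphic $G$ satisfying the listed conditions equals $\Gamma$.

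First I would read off the divisor of $G$ from the functional equation. Writing $G(s)=G(s+n+1)/\big(s(s+1)\cdots(s+n)\big)$ and using $G(1)=1\neq 0$ shows that $G$ has a simple pole at each non-positive integer $-n$, with residue $(-1)^n/n!$. Conversely, the functional equation translates any purported zero of $G$, or any pole located off the non-positive integers, rightward by finitely many units into $\CC_+$; since $G$ has there neither zeros nor poles, it follows that $G$ has no zeros anywhere and no poles besides the ones just found. Hence $F:=G/\Gamma$ is holomorphic and zero-free on all of $\CC$ (the simple poles of $G$ and $\Gamma$ cancel, with equal residues, so $F(-n)=1$), and as a quotient of two finite-order meromorphic functions it has finite order; being entire, zero-free and of finite order, $F=e^{Q}$ with $Q$ a polynomial. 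This is exactly the point where the finite-order hypothesis is indispensable: without it $Q$ could be any $1$-periodic entire function, and indeed $e^{c\sin(2\pi s)}\,\Gamma(s)$ with $c\in\RR$ satisfies all the other requirements.

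Next I would feed the functional equation back in. From $G(s+1)=sG(s)$ and $\Gamma(s+1)=s\,\Gamma(s)$ we get $e^{Q(s+1)}=e^{Q(s)}$, so $Q(s+1)-Q(s)\in 2\pi i\ZZ$ for all $s$; a polynomial with values in a discrete set is a constant $2\pi i k$, and then $Q(s)-2\pi i k s$ is a $1$-periodic polynomial, hence constant. Thus $G(s)=e^{2\pi i k s+c}\,\Gamma(s)$ for some $k\in\ZZ$ and $c\in\CC$. The normalizations now finish it: $G(1)=1$ gives $e^{c}=1$, and taking the logarithmic derivative at $s=1$ gives $G'(1)/G(1)=2\pi i k+\Gamma'(1)/\Gamma(1)=2\pi i k-\gamma$, so $G'(1)\in\RR$ forces $k=0$. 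Therefore $G=\Gamma$.

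I expect the only genuine obstacle to be the structural step above — justifying carefully that $F=G/\Gamma$ is of finite order so that a Hadamard/Borel–Carathéodory argument applies and $Q$ is forced to be polynomial, and checking that the two $\CC_+$-hypotheses are precisely what is needed to eliminate all zeros and all unwanted poles. Everything downstream of the identity $G=e^{Q}\,\Gamma$ is elementary.
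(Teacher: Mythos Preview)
Your uniqueness argument is essentially the paper's: form the quotient $F=G/\Gamma$, observe it is entire, zero-free, and of finite order, hence $F=e^{Q}$ with $Q$ polynomial; periodicity from the functional equation forces $Q(s)=2\pi i k s+c$, and the normalizations $G(1)=1$, $G'(1)\in\RR$ kill $c$ and $k$. Your preliminary divisor analysis (simple poles at $-\NN$, no zeros) is also exactly what the paper records, though the paper places it in the existence discussion rather than in the uniqueness step.

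Where you diverge is in existence: you simply cite that the classical $\Gamma$ has the required properties. That is a valid proof of the theorem as stated, but it reduces the result to a \emph{characterization} of an object you already possess. The paper instead builds the solution from scratch: take the canonical Weierstrass product $g(s)=s^{-1}\prod_{n\geq 1}(1+s/n)^{-1}e^{s/n}$, note that $g(s+1)/(sg(s))$ is entire, zero-free, of finite order, hence $e^{P(s)}$ for a polynomial $P$, and correct by $e^{-Q}$ with $\Delta Q=P$. This is the content that lets the theorem serve as a \emph{definition} --- one never needs to know in advance that $\Gamma$ exists, is of order $1$, has $\Gamma'(1)=-\gamma$, etc. Your shortcut is logically fine but presupposes exactly the theory the paper is trying to found.
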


\begin{definition}[Euler Gamma function]
The only solution to the above conditions is the Euler Gamma function.
\end{definition}

\begin{proof}

In the proof we use the elementary theory of entire function and Weierstrass factorization (in the Appendix we remind 
this basic facts).
We first prove the existence and then uniqueness.

\smallskip

\textbf{Existence:}
If we have a  function satisfying the previous conditions then its divisor is contained in $\CC-\CC_+$,
and the functional equation implies that it has no zeros and only has simple poles at the non-positive integers. 
We can construct directly such a meromorphic function $g$
with that divisor, for instance
\begin{equation}\label{eq:W-solution}
g(s)=s^{-1}\prod_{n=1}^{+\infty} \left ( 1+\frac{s}{n}\right )^{-1}e^{s/n} 
\end{equation}
which converges since $\sum_{n\geq 1} n^{-2} <+\infty$, and is of finite order by the classical Weierstrass factorization.
Now, we have that the meromorphic function $\frac{g(s+1)}{s g(s)}$ has no zeros nor poles and it is of finite 
order (as ratio of finite order functions), hence there exists a polynomial $P$ such that 
$$
\frac{g(s+1)}{s g(s)}=e^{P(s)} \ .
$$
Consider a polynomial $Q$ such that 
\begin{equation}\label{eq:W-solution2}
\Delta Q(s) =Q(s+1)-Q(s) =P(s)
\end{equation}
The polynomial $Q$ is uniquely determined from $P$ up to a constant, hence we can choose $Q$ 
such that $e^{Q(0)}=g(1)^{-1}$. Now we have that  $\Gamma (s) = e^{-Q(s)}g(s)$ satisfies the 
functional equation and all the conditions.

\smallskip

\textbf{Uniqueness:}
Consider a second solution $f$. Let $F(s) = \Gamma(s)/f(s)$. Then $F$ is an entire function of finite order 
without zeros, hence we can write $F(s)=\exp{A(s)}$
for some polynomial $A$. Moreover, the functional equation shows that $F$ is $\ZZ$-periodic. Hence, there exists 
an integer $k\in \ZZ$,  such that for any $s\in \CC$,
$$
A(s+1)=A(s)+2\pi i k \ .
$$
It follows that $A(s)=a+2\pi i k s$ for some $a\in \CC$. Since $F(1)=1$, we have $e^a=1$. Since $F'(1) \in \RR$, and 
$F'(1)=F'(1)/F(1)=2 \pi i k  \in \RR$ we have $k=0$, thus $F$ is constant, $F\equiv 1$ and $f=\Gamma$.
\end{proof}

\textbf{Remarks.}

\begin{itemize}
 \item Using the functional equation we can weaken the conditions 
 and request only that the function is meromorphic only on $\CC_+$ with the corresponding 
 finite order growth. We will eventually use that version of the characterization. We can also 
 assume that it is only defined on a cone containing the positive real axes, a  
 vertical strip of width larger than $1$, or in general with any region $\Omega$ which is a 
transitive region for the integer translations and $f$ satisfies the finite order growth condition in 
$\Omega$ when $s\to +\infty$. 
\begin{proposition}\label{prop:domain}
Let $\Omega\subset \CC$ a domain such that for any $s\in \CC$ there exists an integer $n(s)\in \ZZ$ 
such that $s+n(s)\in \Omega$, and $|n(s)|\leq C |s|^d$, for some constants $C, d>0$ depending 
only on $\Omega$. Then any function $\tilde \Gamma$ satisfying a finite order estimate in $\Omega$ and the functional 
equation $\tilde \Gamma(s+1)=s\tilde \Gamma(s)$ when $s, s+1\in \Omega$, 
extends to a finite order meromorphic function on $\CC$.
\end{proposition}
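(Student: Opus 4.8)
The plan is to propagate $\tilde\Gamma$ from $\Omega$ to all of $\CC$ by iterating the functional equation, and then to control the order by comparison with the Euler function $\Gamma$ of Theorem~\ref{thm:main-Euler-gamma} together with the polynomial bound on $n(s)$. To build the extension, for $s\in\CC$ pick $n\in\ZZ$ with $s+n\in\Omega$ and set
\[
\tilde\Gamma(s):=\frac{\tilde\Gamma(s+n)}{\prod_{j=0}^{n-1}(s+j)}\quad(n\ge 1),\qquad \tilde\Gamma(s):=\Bigl(\prod_{j=n}^{-1}(s+j)\Bigr)\,\tilde\Gamma(s+n)\quad(n\le 0),
\]
which is the given function when $n=0$. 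Since $\Omega$ is open, near any point this is a single such formula with $n$ held fixed, hence locally meromorphic; the point is that it must be independent of the choice of $n$, i.e.\ that the $k$-step identity $\tilde\Gamma(z+k)=\bigl(\prod_{j=0}^{k-1}(z+j)\bigr)\tilde\Gamma(z)$ holds whenever $z,z+k\in\Omega$, whereas the hypothesis supplies only the case $k=1$, and only when $z,z+1$ both lie in $\Omega$.

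Establishing this consistency is where connectedness of $\Omega$ comes in, and it is the step I expect to be the real obstacle. I would pass to the quotient $\pi\colon\CC\to\CC/\ZZ\cong\CC^{*}$: the transitivity hypothesis says precisely $\pi(\Omega)=\CC^{*}$, and setting $h:=\tilde\Gamma/\Gamma$ makes $h$ meromorphic on $\Omega$ with $h(z+1)=h(z)$ whenever $z,z+1\in\Omega$. One first notes $\Omega\cap(\Omega-1)\neq\varnothing$ (otherwise the relation is vacuous and the statement false), which follows from transitivity and connectedness; then one transports, through the local biholomorphism $\pi$, the germs of $h$ carried by the sheets of $\pi$ lying in $\Omega$ over a point of $\CC^{*}$. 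Weak periodicity forces the germs coming from consecutive sheets to coincide, and $\Omega$ maps onto the \'etale space $M\to\CC^{*}$ of these germs by a surjective local homeomorphism, so $M$ is connected; the crux is then to upgrade this to the injectivity of $M\to\CC^{*}$ (all germs over a point agree), for which one takes a base sheet inside $\Omega\cap(\Omega-1)$, checks that its germ has trivial monodromy around the generator of $\pi_{1}(\CC^{*})\cong\ZZ$, and spreads this over $\CC^{*}$ by connectedness of $\Omega$. Then $h$ descends to a meromorphic $\bar h$ on $\CC^{*}$, and $\tilde\Gamma:=(\bar h\circ\pi)\,\Gamma$ is a meromorphic extension to $\CC$ satisfying $\tilde\Gamma(s+1)=s\tilde\Gamma(s)$. (Alternatively, one can argue by the explicit enlargement $\Omega^{(0)}=\Omega$, $\Omega^{(m+1)}=\Omega^{(m)}\cup(\Omega^{(m)}+1)\cup(\Omega^{(m)}-1)$, extending $h$ on each new piece by $h(z):=h(z\mp1)$ and checking at every stage that the extension is consistent and weak periodicity persists, with $\bigcup_{m}\Omega^{(m)}=\CC$.)

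For the order, the key gain is that once extended $h=\tilde\Gamma/\Gamma$ is genuinely $\ZZ$-periodic, so that no polynomial factors intervene. Fix a finite exponent $\rho$ bounding the order of $\tilde\Gamma$ in $\Omega$, enlarged so that it also bounds $h$ there. Then for $|s|$ large, $\log^{+}|h(s)|=\log^{+}|h(s+n(s))|\le C|s+n(s)|^{\rho}$ with $s+n(s)\in\Omega$ and $|s+n(s)|\le|s|+C|s|^{d}=O(|s|^{\max(1,d)})$, so $\log^{+}|h(s)|=O(|s|^{d\rho})$; hence $h$ has finite order on $\CC$, and since $\Gamma$ has order $1$ the product $\tilde\Gamma=h\,\Gamma$ has finite order, at most $\max(d\rho,1)$. (Avoiding $\Gamma$ entirely, estimating $\tilde\Gamma(s)$ directly from the displayed formula gives the same conclusion at the cost of the extra factor $\prod_{j}|s+j|\le\bigl(O(|s|^{\max(1,d)})\bigr)^{C|s|^{d}}=\exp\!\bigl(O(|s|^{d}\log|s|)\bigr)$, still of finite order, provided one runs the circles $|s|=r$ through the gaps between the poles.)
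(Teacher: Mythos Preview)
Your argument is essentially the paper's: form $h=\tilde\Gamma/\Gamma$ (the paper writes $e^{-Q}g$ in place of $\Gamma$, which is the same function), observe that $h$ is $\ZZ$-periodic, obtain a finite-order bound for $h$ on $\Omega$ from the hypothesis on $\tilde\Gamma$ together with the fact that $1/\Gamma$ is entire of finite order, and then transport this bound to all of $\CC$ via periodicity and $|n(s)|\le C|s|^{d}$.

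Where you go beyond the paper is the extension step. The paper dispatches it in one line (``iterating the functional equation we get that $\tilde\Gamma$ extends meromorphically to the whole complex plane''), whereas you correctly flag that the hypothesis only supplies the one-step relation when both $s$ and $s+1$ lie in $\Omega$, so the $k$-step identity needed for a consistent global extension is not immediate; your covering-space argument (and the alternative inductive enlargement $\Omega^{(m)}$) addresses this. In particular, your observation that $\Omega\cap(\Omega-1)\neq\varnothing$ is forced by connectedness together with $\pi(\Omega)=\CC^{*}$ is correct: otherwise $\pi|_{\Omega}$ would be a homeomorphism onto $\CC^{*}$ and furnish a continuous section of the universal cover $\CC\to\CC^{*}$. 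This is added rigor rather than a different method.

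Two minor points. First, $h=\tilde\Gamma/\Gamma$ is actually holomorphic on $\Omega$ (the finite-order estimate on $\tilde\Gamma$ excludes poles there, and $\Gamma$ has no zeros), so the extended $h$ is entire; this slightly streamlines the order estimate. Second, your displayed exponent should be $\rho\max(1,d)$ rather than $d\rho$, to match your own bound $|s+n(s)|=O(|s|^{\max(1,d)})$.
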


\begin{proof}
Let $\tilde \Gamma$ be such a function. Let $\Omega$ be corresponding region.
Iterating the functional equation we get that $\tilde \Gamma$ extends meromorphically to the whole complex plane. 
Then, if $g$ is the Weierstrass product (\ref{eq:W-solution}) and $Q$ a polynomial given by (\ref{eq:W-solution2}), 
the function $h(s)=\tilde \Gamma(s)/(e^{-Q(s)}g(s))$ 
is a $\ZZ$-periodic entire function. Since $1/(e^{-Q}g)$ is an entire function of finite order, we have in $\Omega$ 
the finite order estimate for $h$. Using that $|n(s)|\leq C|s|^d$, we get that $h$ is of 
finite order, hence $\tilde \Gamma$ is meromorphic 
in the plane of finite order.
\end{proof}

\item Assuming $\Gamma$ real-analytic we get $\Gamma'(1)\in \RR$, but this last  
condition is much weaker. Also, as it follows from the proof, we can 
replace this condition by $\Gamma(a)\in  \RR$ for some $a\in \RR-\ZZ$, or only request that $\Gamma$ is asymptotically real, 
$\lim_{x\in \RR, x\to +\infty} \Im \Gamma (x)=0$. 
Without the condition $\Gamma'(1)\in \RR$
the proof shows that $\Gamma$ is uniquely determined up to a factor $e^{2\pi i k s}$. More precisely, we have

\begin{theorem}\label{thm:main-Euler-gamma_bis}
Let $f$ be a finite order meromorphic function in $\CC$, 
without zeros nor poles in $\CC_+$, such that $f(1)=1$, and satisfying the functional equation
 $$
 f(s+1)=s\, f(s)  \ ,
 $$
 then there exists $k\in \ZZ$ such that 
 $$
 f(s)=e^{2\pi i k s} \Gamma (s) \ .
 $$
\end{theorem}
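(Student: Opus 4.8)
The plan is to reduce this to the uniqueness argument already carried out in the proof of Theorem~\ref{thm:main-Euler-gamma}, the only change being that the reality condition $\Gamma'(1)\in\RR$ is no longer available, so the integer $k$ need not vanish.

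First I would check that $f$ has exactly the same divisor as $\Gamma$. Since $f$ has neither zeros nor poles in $\CC_+$ and satisfies $f(s+1)=s\,f(s)$, iterating the functional equation (exactly as in the proof of Proposition~\ref{prop:domain}) extends $f$ to a meromorphic function on all of $\CC$ whose zeros and poles can only lie at the non-positive integers. Evaluating $f(1)=0\cdot f(0)$ together with $f(1)=1$ forces a simple pole at $s=0$, and iterating downward gives simple poles at every non-positive integer and no zeros anywhere. Consequently $F(s)=\Gamma(s)/f(s)$ is a zero-free entire function, and being a ratio of finite order meromorphic functions it is itself of finite order.

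Next, by Hadamard factorization (as used already in the proof of Theorem~\ref{thm:main-Euler-gamma}) there is a polynomial $A$ with $F=e^{A}$. The functional equations for $\Gamma$ and for $f$ give $F(s+1)=F(s)$, so $A(s+1)-A(s)\in 2\pi i\,\ZZ$ for all $s$; since the left-hand side is continuous in $s$ it is a fixed constant $2\pi i k$ with $k\in\ZZ$. A polynomial whose first difference is constant must be affine, hence $A(s)=a+2\pi i k s$ for some $a\in\CC$, and $F(1)=1$ forces $e^{a}=1$. Therefore $F(s)=e^{2\pi i k s}$, i.e. $f(s)=e^{-2\pi i k s}\Gamma(s)$; replacing $k$ by $-k$ yields the asserted form.

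I do not expect a genuine obstacle here beyond the bookkeeping. The one point that deserves care is the verification that $F$ is entire and zero-free — equivalently, that the functional equation together with $f(1)=1$ and the absence of zeros and poles in $\CC_+$ already pins down the divisor of $f$ to coincide with that of $\Gamma$. Once that is established, the periodicity-plus-polynomial argument is exactly the one in the uniqueness half of Theorem~\ref{thm:main-Euler-gamma}, now simply stopping before the step that used $\Gamma'(1)\in\RR$.
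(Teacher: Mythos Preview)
Your proposal is correct and follows exactly the route the paper intends: the paper does not give a separate proof of Theorem~\ref{thm:main-Euler-gamma_bis} but simply remarks that the uniqueness argument of Theorem~\ref{thm:main-Euler-gamma} already yields it once one drops the final step that used $\Gamma'(1)\in\RR$ to force $k=0$. Your write-up reconstructs precisely that argument (with the divisor computation made a bit more explicit), so there is nothing to add.
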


\medskip

\item We can be more precise in the proof observing that the canonical product (\ref{eq:W-solution}) is of order $1$. 
Hence $g(s+1)/(sg(s))$ is of order $\leq 1$, so the polynomial $P$ has degree at most $1$, and $Q$ has degree at 
most $2$. We can compute these polynomials. Let $\gamma$ be the Euler-Mascheroni constant
$$
\gamma =\lim_{N\to +\infty} \sum_{n=1}^{N} \frac1n -\log N \ .
$$
We have by direct computation $g(1)=e^{\gamma}$. Also we have 
\begin{align*}
\frac{g(s+1)}{sg(s)}&=\lim_{N\to +\infty} \frac{1}{s+1} \prod_{n=1}^N  
\frac{1+\frac{s}{n}}{1+\frac{s+1}{n}} \, e^{1/n}\\
&=\lim_{N\to +\infty} \exp \left ( \sum_{n=1}^N \frac1n -\log (N+1+s)\right ) \\
&=e^\gamma \\
\end{align*}
so $P(s)=\gamma$ is a constant polynomial, and then $Q(s)=\gamma s$. We get:

\begin{corollary}\label{cor:W-product}
 We have
 $$
 \Gamma(s) =s^{-1}e^{-\gamma s}\prod_{n=1}^{+\infty} \left ( 1+\frac{s}{n}\right )^{-1}e^{s/n} \ .
 $$
\end{corollary}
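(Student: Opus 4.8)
The plan is simply to make the existence construction in the proof of Theorem~\ref{thm:main-Euler-gamma} entirely explicit. There we produced $\Gamma(s)=e^{-Q(s)}g(s)$, where $g$ is the canonical product \eqref{eq:W-solution}, $P$ is the polynomial with $g(s+1)/(sg(s))=e^{P(s)}$, and $Q$ is a polynomial solving $\Delta Q=P$, normalized by $\Gamma(1)=1$. So everything reduces to identifying $P$ and $Q$, and this in turn reduces to two elementary limit evaluations on the partial products of \eqref{eq:W-solution}.

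First I would compute $g(1)$ from its $N$-th partial product:
$$g(1)=\lim_{N\to\infty}\prod_{n=1}^N \frac{n}{n+1}\,e^{1/n}=\lim_{N\to\infty}\frac{1}{N+1}\exp\Big(\sum_{n=1}^N \tfrac1n\Big)=\lim_{N\to\infty}\exp\Big(\sum_{n=1}^N \tfrac1n-\log(N+1)\Big)=e^{\gamma},$$
using the definition of the Euler--Mascheroni constant and $\log(N+1)-\log N\to 0$. Next I would treat the ratio $g(s+1)/(sg(s))$ the same way: the rational factors telescope, $\prod_{n=1}^N \frac{1+s/n}{1+(s+1)/n}=\prod_{n=1}^N \frac{n+s}{n+s+1}=\frac{1+s}{N+1+s}$, so, absorbing the prefactor $1/(s+1)$ and $\prod_{n=1}^N e^{1/n}$, one gets $g(s+1)/(sg(s))=\lim_{N\to\infty}\exp\big(\sum_{n=1}^N 1/n-\log(N+1+s)\big)=e^{\gamma}$ for $s$ away from the non-positive integers, again by the definition of $\gamma$ and $\log(N+1+s)-\log N\to 0$.

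Since $g(s+1)/(sg(s))$ is entire and zero-free (as recorded in the existence proof), the identity $g(s+1)/(sg(s))=e^{\gamma}$ on this open set extends to all of $\CC$, so $P\equiv\gamma$ is constant and hence $Q(s)=\gamma s+c$; alternatively, one first notes that the canonical product \eqref{eq:W-solution} has order exactly $1$ (a genus-one product), which a priori forces $\deg P\le 1$ and $\deg Q\le 2$, and the computation above then shows $P$ is in fact constant. The normalization $\Gamma(1)=e^{-Q(1)}g(1)=1$ together with $g(1)=e^{\gamma}$ gives $Q(1)=\gamma$, hence $c=0$ and $Q(s)=\gamma s$. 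Therefore $\Gamma(s)=e^{-\gamma s}g(s)=s^{-1}e^{-\gamma s}\prod_{n=1}^\infty(1+s/n)^{-1}e^{s/n}$, which is Corollary~\ref{cor:W-product}. There is no genuine obstacle here; the only points requiring care are the legitimacy of rearranging the already-convergent product into the telescoped form before passing to the limit, and correctly pinning down the additive constant of $Q$ from the normalization — everything else is the two bookkeeping limits above.
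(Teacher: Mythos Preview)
Your proof is correct and follows essentially the same route as the paper: it computes $g(1)=e^\gamma$ and $g(s+1)/(sg(s))=e^\gamma$ by telescoping the partial products, concludes $P\equiv\gamma$ and $Q(s)=\gamma s$, and also records (as the paper does) the order-$1$ observation bounding $\deg P$ and $\deg Q$ a priori. Your treatment of the normalization step fixing the additive constant in $Q$ is in fact slightly more explicit than the paper's.
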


\end{itemize}

 \medskip

 This characterization of the Gamma function came out naturally from related work \cite{MPM1} in the context of higher Barnes 
Gamma functions. We  could not find this characterization in the literature, even for Euler Gamma function. 
After revising the extensive literature on Gamma function and Eulerian integrals (see \cite{PM1}), this definition seems to be new. 
What is closer is Wielandt's beautiful characterization of Euler Gamma function among 
meromorphic functions of order $1$, but Wieland's result does not extend to higher Gamma function that are of higher order than $1$.


This new definition presents some advantages compared to other more popular ones. It is natural from the 
point of view of complex analysis and  
generalizes naturally to higher Gamma functions. Before presenting these more general 
results, we review in the next section the convoluted history of all the different 
definitions and characterizations of Euler Gamma function. In a third section we examine the equivalence of our 
definition with other existing definitions and characterizations, and we derive from it a substantial amount of 
classical results for Euler Gamma function. In the companion article \cite{PM2020}, 
we generalize our definition to higher Gamma functions.

\section{Historical definitions of Euler Gamma function. }

The literature 
on Euler Gamma function is composed by literally thousands of pages, and covers 
almost 300 years of articles and monographs in Latin, English, French, German,... 
Unfortunately many citations 
and references are erroneous, even from Gauss or Weierstrass. In particular, many  of Euler's contributions 
have been misattributed. Euler himself also forgets to give proper credit to some of its predecessors.  
We have tried, to the best of our knowledge,
to trace to the original sources. Some notes with corrections to the literature can be found in \cite{PM1}.

For a brief historical accounts on the subject of the Gamma function and Eulerian Integrals (as the subject 
was known in the XIXth century), 
we refer to the accurate historical notes in chapter 2 of Remmert's book (1998, \cite{Rem2}).  
The recent Aycock's preprint (2019, \cite{Ayc3}) elaborates on Euler's original contributions. 
Davis' article (1959,  \cite{Da}) is informative but lacks of precise references. Other useful 
more recent articles are Sebah and Gourdon (2002, \cite{SG}), Srinivasan (2007. \cite{Sri}), Borwein and 
Corless (2017,  \cite{BC}).

For work before the XXth century we refer Godefroy's book (1901,  \cite{Go}) and to Nielsen's book (1906,  \cite{Nie}) and 
its bibliography. Also Gronwall's english translation of Jensen's article  (1916, \cite{Je1916}) is informative. 
In particular the memoir by Godefroy (1901,  \cite{Go}) and the article by Pringsheim (1888, \cite{Pr}) 
are particularly focused in the various 
definitions before the XXth century.

\medskip

\subsection{Euler integral form definition}
L. Euler (1730) introduced in a letter to  Goldbach, dated January 8th 1730 (\cite{[E00717]})
an integral form for the Gamma function that 
interpolates the factorial function valid for positive real  
values $s>0$,
\begin{equation}\label{eqn:property1}
\Gamma(s) = \int_0^1 \left (\log (1/t)\right )^{s-1} dt  \ .
\end{equation}
The interpolation of the factorial and associated series was started by J. Stirling (1730, \cite{Sti}), and this integral 
representation was a major progress by Euler. 

The most popular version nowadays of this integral form is:
\begin{equation}\label{eqn:property2}
\Gamma(s)=\int_0^{+\infty} t^{s-1} e^{-t}\, dt \ .
\end{equation}

Euler Gamma function is introduced by means of this formula and its properties developed from there in 
many contemporary expositions. Some classical authors that have prefered this approach 
include Legendre, Liouville, Pringsheim,...In particular Legendre's ``Exercices de Calcul integral'' \cite{Le} 
was one of the main references for Eulerian Integrals in the XIXth century.

\subsection{Product formula definition}\label{sec:product-def}

Euler originally defines the Gamma function by a product formula in a letter to Goldbach, dated 
October 13th 1729  \cite{[E00715]}, 
\begin{equation}\label{eqn:property3}
\Gamma (s)= \lim_{n\to +\infty} \frac{n!}{s(s+1)\ldots (s+n)} \,  n^s  \, .
\end{equation}
Later, this product formula was taken by Gauss (1812, \cite{Ga}) as the starting point to develop the theory.
Apparently, Gauss was unaware of Euler's letter to Goldbach and didn't credit him for the product formula, neither 
Weierstrass was aware and credited Gauss for it, even as late as 1876 in \cite{Wei3} (as remarked by Remmert, 
\cite{Rem2} p.41).
This approach is more direct and has some advantages. In the introduction of \cite{Go}, 
Godefroy discuss its merits compared to the integral form definitions.

\subsection{Weierstrass product definition}\label{sec:W-product-def}

Weierstrass (1876, \cite{Wei2}) developed a general theory of factorization of meromorphic functions in 
the complex plane, later made precise by Hadamard for functions of finite order. In particular, we have
the factorization of Euler Gamma function
\begin{equation}\label{eqn:property4}
\frac{1}{\Gamma (s)}= s e^{\gamma s} \prod_{n=1}^{+\infty } \left (1+\frac{s}{n}\right ) e^{-s/n} 
\end{equation}
that was known before Weierstrass. Davis \cite{Da} gives credit to Newman (1848, \cite{Ne}) that found this factorization, 
but N\"orlund \cite{No1} and Remmert \cite{Rem2} attribute the formula 
to Schl\"omich (1843, \cite{Sch}) that found it earlier. Aycock observed in \cite{Ayc3} p.21 that the computations 
to reach the product representation are already contained in Euler's work (1787, \cite{[E613]}).

This Weierstrass factorization can be taken as definition of Euler Gamma function. Obviously, it contains 
much information (for instance, the divisor structure) that is granted without prior justification. Nevertheless, 
this is sometimes the preferred approach 
in some of the reference literature on special functions, as for example Whittaker and 
Whatson (1927,\cite{WW}).

\subsection{Birkhoff's asymptotic expansion definition.}

Birkhoff (1913, \cite{Bir}), inspired from Stirling's asymptotics, gave an unorthodox definition of Euler Gamma function as a limit, for $s\notin -\NN$,
$$
\Gamma(s) = \lim_{n\to +\infty} \frac{\varphi(x+n+1)}{s(s+1)\ldots (s+n)} 
$$
where 
$$
\varphi (s)=\sqrt{2\pi} s^{s-1/2}e^{-s} \ .
$$
He proves directly that this limit exists and is a meromorphic function without 
zeros. Birkhoff derived from its definition some of the main classical results for 
the Gamma function. Similar to Weierstrass product definition, this assumes the a priori knowledge 
of Stirling formula and it is not very natural for an introduction from scratch of Euler Gamma function.

\subsection{Inverse Laplace transform and Hankel contour integral definitions.}\label{sec:Hankel-def}

The inverse Laplace transform formula, found by Laplace (1782, \cite{Lap1}) is probably the oldest complex analytic formula 
for (the inverse of) the Gamma function, and probably for this reason is well deserving of the qualification of ``r\'esultat remarquable'' 
as Laplace refers to it in \cite{Lap1} and \cite{Lap2}. Laplace formula is, for $\Re s >0$ and  $x>0$,
$$
\frac{1}{\Gamma (s)} = \frac{1}{2\pi} \int_{-\infty}^{+\infty} \frac{e^{x+iy}}{(x+iy)^s} \, dy
$$
The integral is independent of $x>0$. One can define the Gamma function from this formula and derive its main properties as is
done by Pribitkin  \footnote{The article starts with the odd historic claim that ``In 1812 Laplace establishes...'' 
citing the ``Trait\'e analytique des probabilit\'es'' \cite{Lap2} p.134 but the formula was established 30 years 
earlier in 1782 \cite{Lap1}.} (2002, \cite{Pri}).

\medskip

The Laplace transform formula is closely related to the Hankel contour integral representation for the inverse of the Gamma 
function found by Hankel  (1864, \cite{Hankel}). Consider a path $\eta$ from $-\infty$ to $-\infty$ surrounding the negative real axes 
$\RR_-=]-\infty, 0]$ (see Figure 1), then we have for all values of $s\in \CC$
$$
\frac{1}{\Gamma (s)} =\frac{1}{2\pi i} \int_\eta z^{-s} e^z \, dz
$$
where for $z\in \CC-\RR_-$,  $z^{-s}=e^{-s\log z}$ taking the principal branch of the logarithm function. Note that Hankel integral 
converges without any restriction on $s\in C$ and thus defines an entire function.

\medskip

\begin{figure}[ht]
\centering
\resizebox{8cm}{!}{\includegraphics{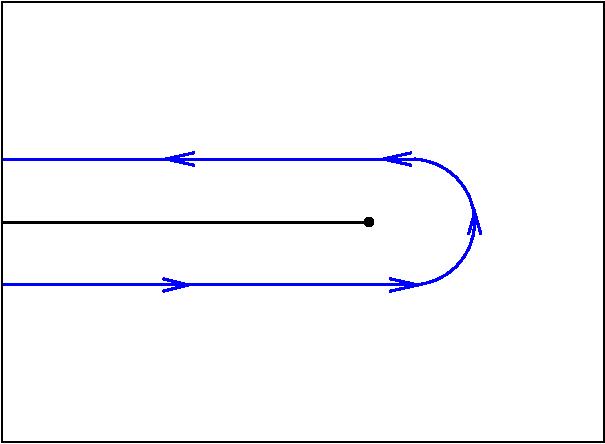}}    
\put (-20,150) {$\CC$}
\put (-130,115) {$\eta$}
\put (-190,90) {$\RR_-$}
\put (-130,45) {$\eta$}
\put (-85,85) {$0$}
\caption{Hankel contour.} 
\end{figure}

\medskip

It is not difficult to see, that for $\Re s >0$, the Laplace integral is equal to Hankel integral.
For $R>0$, consider $\eta_R=\eta \cup \{\Re z >-R\}$ and complete it in 
a closed polygonal contour $C_R$ with vertices at $-R\pm iR$, $x \pm iR$, see Figure 2.   
We have by Cauchy formula
$$
\int_{C_R} z^{-s} e^z \, dz =0
$$
and 
$$
\lim_{R\to +\infty} \int_{C_R} z^{-s} e^z \, dz = \int_\eta z^{-s} e^z \, dz + i \int_{-\infty}^{+\infty} \frac{e^{x+iy} }{(x+iy)^s}  \, dy
$$
thus
$$
\frac{1}{2\pi i} \int_\eta z^{-s} e^z \, dz = \frac{1}{2\pi} \int_{-\infty}^{+\infty} \frac{e^{x+iy}}{(x+iy)^s} \, dy  \ .
$$

\medskip

\begin{figure}[ht]
\centering
\resizebox{8cm}{!}{\includegraphics{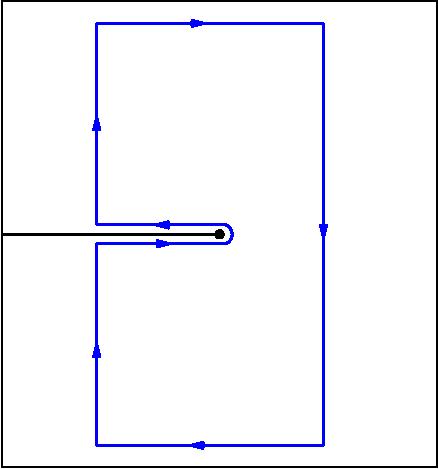}}    
\put (-130,217) {$C_R$}
\put (-210,125) {$\RR_-$}
\put (-130,20) {$C_R$}
\put (-105,110) {$0$}
\put (-226, 230) {$-R+iR$}
\put (-55, 230) {$R+iR$}
\put (-55, 5) {$R-iR$}
\put (-226, 5) {$-R-iR$}
\put (-55,120) {$C_R$}

\caption{Completed polygonal contour.} 
\end{figure}

\medskip

There is a related second Hankel integral formula, for $s\in \CC-\NN_-$, 
$$
\Gamma (s) =\frac{1}{2i\sin (\pi s)} \int_\eta z^{s-1} e^z \, dz \ .
$$
Both Hankel integrals are related by Euler reflection formula
$$
\Gamma (s) \Gamma (1-s) =\frac{\pi}{\sin(\pi s)} \ .
$$

\subsection{Lerch derivation from Hurwitz zeta function}

Euler Gamma function can be defined from Hurwitz zeta functions via Lerch formula (1894, \cite{Le1894}) 
\begin{equation}\label{eqn:property5}
\log \Gamma(s) = \left [\frac{\partial}{\partial t}\zeta(t,s) \right ]_{t=0}-\zeta'(0)
\end{equation}
where 
\begin{equation}\label{eqn:property5-def}
\zeta(t,s)=\sum_{k=0}^{+\infty} \frac{1}{(s+k)^t}
\end{equation}
is the Hurwitz zeta function, $\Re t>1$ and $s\in \CC$, and $\zeta(t)=\zeta(t,1)$ is the usual Riemann zeta 
function. 

This requires some previous work by proving the meromorphic 
extension of Hurwitz zeta function to a neighborhood of $t=0$. This is usually done through a Hankel  
contour integral following closely what was done by Riemann for the Riemann zeta function in his memoir 
on the distribution of prime numbers \footnote{Despite the terminology 
Riemann performed his contour integral some years before Hankel. It is difficult to trace the original 
source for Hankel type contour integrals.} (1859, \cite{Rie}). 
This approach has been used very effectively by Barnes and Shintani for the definition of higher Gamma functions.
Unfortunately, it does not make 
much sense to define Euler Gamma function since it 
can itself be defined directly with such a Hankel integral, as seen in the 
previous section. This approach only makes sense provided that we prove the meromorphic 
extension of Hurwitz zeta function to $t=0$
by other means. Note also that we are not allowed to use the  integral formula
$$
\zeta(t,s) =\frac{1}{\Gamma (s)} \int_0^{+\infty} \frac{t^{s-1} e^{-at}}{1-e^{-t}} \, dt
$$
that contains the Gamma function (there is an elementary proof of the meromorphic extension 
using this integral formula).  Berndt (1985, \cite{Be1985}) presented a direct way, on which he bases his derivation 
of the main properties of Euler Gamma function. 
By elementary integration by parts, or using Euler-McLaurin formula to the first order, we get
$$
\zeta(t,s) =\frac{s^{1-t}}{t-1} + \frac{s^{-t}}{2} -t \int_0^{+\infty} \frac{u-[u]-1/2}{(u+s)^{t+1}} \, du
$$
which is valid for $\Re t >-1$ and proves the meromorphic extension to $t=0$.

\subsection{Functional equation}

The condition $\Gamma (1)=1$ and the functional equation
$$
\Gamma(s+1)=s\Gamma(s)
$$
determine the values taken at the positive integers  and 
show that the Gamma function is an interpolation of the factorial function, for $n\in \NN$,
$$
\Gamma(n+1) =n!
$$
Unfortunately, these conditions do not suffice to determine $\Gamma(s)$ uniquely for 
any real or complex argument $s$, since we can always multiply a solution by the exponential of a non-zero periodic function 
vanishing at the positive integer values. Euler tried to solve the functional equation 
by different methods, not always rigorous but displaying a remarkable creativity. For example,  
by using the Euler-McLaurin formula or by formulating the difference equation as an infinite order differential equation 
(see the fascinating account by Aycock (2019, \cite{Ayc3})).

Starting from the XIXth century, mathematicians from the German school added conditions to the functional 
equation to determine the Gamma function uniquely.

\subsection{Functional equation plus asymptotic estimates}

Weierstrass (1856,\cite{Wei})  added to the functional equation 
the condition\footnote{Aycock \cite{Ayc3} p.18 remarks that Weierstrass added the 
condition $\Gamma(1)=1$ which is not needed.}:
\begin{equation}\label{eqn:property6}
\lim_{n\to +\infty} \frac{\Gamma(s+n)}{(n-1)!\, n^s} =1
\end{equation}
Weierstrass attributes this last condition to Gauss, but as observed by Aycock \cite{Ayc3} p.24, it was
already known to Euler (postumosly published in 1793,\cite{[E652]}). See also the article by Prym (1877, \cite{Pry}). Iterating 
the functional equation, and using this asymptotic estimate
leads directly to the Euler-Gauss product formula.

\medskip

Essentially equivalent is the characterization by   
Laugwith and Rodewald (1987, \cite{LauRod1987}) who prove, assuming $\Gamma(s)>0$ for real $s>0$, 
$\Gamma (1)=1$, that it is enough to add to the functional equation the limit
\begin{equation}\label{eqn:property7}
\lim_{n\to +\infty} \left (\log \Gamma(s+n) -\log \Gamma (n)- s\log (n+1) \right ) =0
\end{equation}
in order to determine the Gamma function uniquely. Limit (\ref{eqn:property7}) is obtained from  (\ref{eqn:property6}) 
using $\Gamma (n)=(n-1)!$ and taking the logarithm.

\subsection{Functional equation plus estimates: Wielandt's, Fuglede's and Smith's characterization}

In 1939 Wielandt (\cite{Wie}, \cite{Rem1}) observes, by using Liouville Theorem, that there is only one function 
satisfying the functional equation, holomorphic in the right half plane $\CC_+$, taking the value $1$ 
at $1$, and bounded on a vertical strip of width larger than $1$. More precisely, he proves:

\begin{theorem}[Wielandt, 1939]
Let $f$ that is holomorphic in $\CC_+$, $f(1)=1$, bounded in the strip $S_1=\{1\leq \Re s <2\}$, such that 
$$
f(s+1)=sf(s)
$$
then $f=\Gamma$.
\end{theorem}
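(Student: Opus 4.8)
The plan is to reduce Wielandt's characterization to Theorem \ref{thm:main-Euler-gamma} (or rather to its boundedness-in-a-strip corollary already available via Proposition \ref{prop:domain}), by showing that the function $f$ in the hypothesis automatically meets all the conditions appearing in my new definition. The key observation is that boundedness on the strip $S_1$ is a very strong finite-order estimate, so the only genuine work is to propagate the hypotheses to the full plane and to extract the normalization $f'(1)\in\RR$ — or, better, to reprove uniqueness directly in this simpler setting by Liouville's theorem, following Wielandt's own argument.

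Here is how I would carry it out. First, I would extend $f$ to a meromorphic function on $\CC$: since $f$ is holomorphic on $\CC_+$ and satisfies $f(s+1)=sf(s)$, the formula $f(s)=f(s+n)/\bigl(s(s+1)\cdots(s+n-1)\bigr)$ defines the extension, with at worst simple poles at $0,-1,-2,\dots$; the vertical strip $S_1$ is a transitive region for integer translations with $|n(s)|\le C|s|$, so Proposition \ref{prop:domain} applies and $f$ is of finite order on $\CC$, with no zeros or poles in $\CC_+$ (boundedness on $S_1$ forces no zeros there via the functional equation, since a zero at $s_0$ would propagate, and more directly $f$ is bounded hence has finitely many zeros in $S_1$, but the functional equation then forces none — alternatively one just invokes that $\Gamma$ has none and works with $F=\Gamma/f$). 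Second, form $F(s)=\Gamma(s)/f(s)$: by the functional equation this is $\ZZ$-periodic, and by the first step it is entire of finite order without zeros, so $F=e^{A(s)}$ with $A$ a polynomial, and periodicity gives $A(s)=a+2\pi i k s$. Third — and this is the Wielandt-specific input — I would show $k=0$ and $a=0$ using boundedness rather than the condition $f'(1)\in\RR$: on $S_1$ both $\Gamma$ and $f$ are bounded, and $1/f$ is bounded away from the poles, so $F=e^{a+2\pi i k s}$ is bounded on a full-width strip; but $|e^{2\pi i k s}|=e^{-2\pi k\,\Im s}$ is unbounded on any vertical line unless $k=0$, hence $k=0$, $F\equiv e^a$, and $F(1)=\Gamma(1)/f(1)=1$ gives $e^a=1$, so $f=\Gamma$.

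Alternatively, and perhaps cleaner for exposition, I would present Wielandt's original Liouville-theorem proof: set $F=\Gamma/f$, note it is entire (the simple poles of $\Gamma$ and $f$ at the non-positive integers cancel, and neither vanishes on $\CC_+$), $\ZZ$-periodic, and bounded on $\overline{S_1}=\{1\le\Re s\le 2\}$ because $\Gamma$ is bounded there and $f$ is bounded there with $1/f$ bounded after removing the (removable, since $F$ is entire) singularity at $s=1$; by periodicity $F$ is then bounded on all of $\CC$, hence constant by Liouville, and $F(1)=1$ gives $F\equiv 1$. I would state explicitly the standard fact that $|\Gamma(s)|$ is bounded on $\{1\le\Re s\le 2\}$ — this follows from $\Gamma(s+1)=s\Gamma(s)$ and continuity on the compact rectangle minus a neighborhood of $s=0$, or from the integral formula — since this is the one external ingredient. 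I expect the main obstacle, or rather the main point requiring care, to be the bookkeeping at $s=1$ and at the non-positive integers: one must check that $F=\Gamma/f$ has only removable singularities there (so that "bounded near $1$" is legitimate), which follows because $f$, like $\Gamma$, has a genuine simple pole at each non-positive integer — a consequence of the functional equation applied to the nonvanishing, finite, holomorphic value of $f$ just to the right — and is holomorphic and (one must argue) nonzero throughout $\CC_+$, the nonvanishing on $\CC_+$ being the only slightly delicate structural claim, handled by noting that boundedness of $f$ on $S_1$ plus the functional equation $f(s)=f(s+1)/s$ would otherwise force $\Gamma/f$ to have a pole in $\CC_+$ contradicting its being entire, or simply by deferring to the already-established divisor structure in the existence part of the proof of Theorem \ref{thm:main-Euler-gamma}.
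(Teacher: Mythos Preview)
Your ratio approach has a genuine gap that the paper's proof deliberately sidesteps. To make $F=\Gamma/f$ entire you need $f$ to be zero-free on all of $\CC$, and your two arguments for this do not work: the first (``\ldots contradicting its being entire'') is circular, and ``bounded hence has finitely many zeros in $S_1$'' is simply false for an unbounded strip (think of Blaschke-type products after a conformal map). In fact, zero-freeness of a Wielandt solution is a nontrivial statement which the paper proves only \emph{after} Wielandt's theorem, in Proposition~\ref{prop:no_zeros}, via the auxiliary function $f(s)f(1-s)\sin(\pi s)$. There is a second gap even if one grants entirety: to run Liouville you need $F=\Gamma/f$ bounded on $\overline{S}_1$, hence $1/f$ bounded there, i.e.\ $|f|$ bounded \emph{below} on the strip --- but the hypothesis only bounds $|f|$ from above, and indeed $\Gamma$ itself decays to $0$ along vertical lines, so no such lower bound is automatic.

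The paper avoids both problems by working with the \emph{difference} rather than the ratio. Given two solutions $f,g$, the functional equation forces identical simple poles with identical residues $(-1)^n/n!$ at $-n$, so $h=f-g$ is entire; boundedness of $h$ on $\overline{S}_{0,1}$ follows directly from $h(s)=s^{-1}h(s+1)$ without any lower bound on $|f|$. The remaining trick is that $h$ itself is not periodic, so one passes to $l(s)=h(s)h(1-s)$, checks $l(s+1)=-l(s)$, hence $|l|$ is $1$-periodic and bounded, whence $l$ is constant by Liouville; since $h(1)=0$ one gets $l\equiv 0$ and $f=g$. If you want to salvage a ratio-style argument, use $G=f/\Gamma$ instead (entire because $\Gamma$ is already known to be zero-free), but you will then still need growth information on $1/\Gamma$ along vertical lines to force $G$ constant, which is more than the paper's self-contained argument requires.
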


Liouville Theorem was used 
before by Birkhoff (1913, \cite{Bir}) in order to give a quick proof of  
some properties of Euler's Gamma function from his definition. The arguments are 
close to those in Wielandt's characterization (see also Remmert's comments at this respect in \cite{Rem1}). It is possible 
that Wieland was inspired by Birkhoff's article.

From Euler integral form of the Gamma function we deduce easily  that it is bounded on vertical strips in the 
right half plane $\CC_+$, hence we get a very simple and satisfactory characterization 
of Euler Gamma function\footnote{As observed before, we also get a definition, but this 
is not the approach taken by Remmert in  \cite{Rem1} where he assumes the Gamma function is initially defined by Euler's integral formula}. 
Note also, that  Wielandt's Theorem does not even need to asume the function to be real analytic.
We have a full genuine complex characterization.

See Remmert's article \cite{Rem1} for a detailed account 
(see also Aycock \cite{Ayc3} p.12).

\medskip

Fuglede (2008, \cite{Fu}) weakens substantially Wielandt's boundedness condition by 
noticing that we can allow exponential growth (of order $1$) in the vertical strip. More precisely, he proves that we can replace boundedness of $f$ on $S_1$ by a weaker condition.

\begin{theorem}[Fuglede, 2008]
In Wielandt's theorem we can replace boundedness of $f$ on $S_1$ by the condition, for $s\in \overline{S}_1$ 
$$
f(s)=\cO \left (|\Im s|^{\Re s-\frac{1}{2}} e^{\frac{3\pi}{2} \Im s}\right )
$$
\end{theorem}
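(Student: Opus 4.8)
The plan is to follow the structure of Wielandt's argument but replace the appeal to Liouville's theorem by an appeal to a Phragm\'en--Lindel\"of type estimate that tolerates the admissible exponential growth. So let $f$ be as in the hypotheses and set $F(s)=f(s)-\Gamma(s)$. Since both $f$ and $\Gamma$ are holomorphic in $\CC_+$ (for $\Gamma$ this is classical, or follows from Theorem \ref{thm:main-Euler-gamma}) and both satisfy $g(s+1)=sg(s)$, the difference $F$ satisfies the same functional equation $F(s+1)=sF(s)$, and $F(1)=0$. The functional equation then lets us extend $F$ to a meromorphic function on all of $\CC$ whose only possible poles are at the non-positive integers; but $F(1)=0$ together with $F(s)=F(s+1)/s$ forces $F$ to be holomorphic at $s=0$, and inductively holomorphic at every non-positive integer, so $F$ is entire. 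The goal is to show $F\equiv 0$.

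Next I would control the size of $F$ on the closed strip $\overline{S}_1=\{1\le \Re s\le 2\}$. On this strip $f$ satisfies the stated bound $f(s)=\cO(|\Im s|^{\Re s-1/2}e^{\frac{3\pi}{2}|\Im s|})$, and the classical asymptotics for $\Gamma$ on vertical strips (Stirling's formula, which gives $|\Gamma(\sigma+it)|\sim \sqrt{2\pi}\,|t|^{\sigma-1/2}e^{-\pi|t|/2}$ as $|t|\to\infty$) show $\Gamma$ is far smaller there; hence $F$ obeys the same $\cO\bigl(|\Im s|^{\Re s-1/2}e^{\frac{3\pi}{2}|\Im s|}\bigr)$ bound on $\overline S_1$. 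Now consider the auxiliary function
$$
G(s)=F(s)\,F(3-s).
$$
The reflection $s\mapsto 3-s$ maps $\overline S_1$ to itself and swaps the two boundary lines $\Re s=1$ and $\Re s=2$. On $\overline S_1$ we have $|F(3-s)|=\cO\bigl(|\Im s|^{(3-\Re s)-1/2}e^{\frac{3\pi}{2}|\Im s|}\bigr)$, so multiplying the two bounds the powers of $|\Im s|$ combine to exponent $(\Re s-1/2)+(3-\Re s-1/2)=2$, giving
$$
G(s)=\cO\bigl(|\Im s|^{2}e^{3\pi|\Im s|}\bigr)
$$
uniformly on $\overline S_1$, with the polynomial factor now independent of $\Re s$.

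Then I would run Phragm\'en--Lindel\"of on the strip $\overline S_1$ for the function $H(s)=G(s)\,e^{-3\pi i s}\cdot(\text{a suitable power correction})$; more cleanly, compare $G$ against $e^{3\pi i s}+e^{-3\pi i s}=2\cos(3\pi s)$, or simply invoke the strip version of Phragm\'en--Lindel\"of which says that a holomorphic function on $\overline S_1$ of order $\le 1$ in $\Im s$ whose modulus on the two boundary lines is $\cO(|\Im s|^2 e^{3\pi|\Im s|})$ satisfies the same bound throughout the strip. Combined with the functional equation $F(s+1)=sF(s)$, which propagates any bound on $\overline S_1$ to all of $\CC$ with only polynomial loss, this shows $F$ is an entire function of order $\le 1$ and exponential type exactly controlled by $3\pi/2$ on horizontal lines. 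At this point the periodized quotient trick from the proof of Theorem \ref{thm:main-Euler-gamma} kicks in: if $F\not\equiv 0$, then $F/\Gamma$ (which is entire since $\Gamma$ has no zeros) is $\ZZ$-periodic, hence equals $\Phi(e^{2\pi i s})$ for some function $\Phi$ holomorphic on $\CC^*$; the growth bounds just obtained, translated through $w=e^{2\pi i s}$, force $\Phi$ to be a Laurent polynomial, so $F/\Gamma$ is a finite exponential sum $\sum_{|k|\le N} c_k e^{2\pi i k s}$, and matching this against the precise Stirling growth of $\Gamma$ on $S_1$ — here is where the exponent $3\pi/2$ rather than $2\pi$ in Fuglede's hypothesis is essential, since $|e^{2\pi i k s}|=e^{-2\pi k\Im s}$ grows like $e^{2\pi |k||\Im s|}$ and $|\Gamma|$ decays like $e^{-\pi|\Im s|/2}$, so the product of a nonzero term with $|k|\ge 1$ would grow at least like $e^{3\pi|\Im s|/2}$, which is compatible, but $F(1)=0$ kills the constant term — eventually forces all $c_k=0$.

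The main obstacle, and the place where care is genuinely needed, is the Phragm\'en--Lindel\"of step: one must choose the comparison function so that its growth on the boundary lines matches $\cO(|\Im s|^{\Re s-1/2}e^{\frac{3\pi}{2}|\Im s|})$ exactly — not merely up to a constant — because the constant $\tfrac{3\pi}{2}$ is sharp (it is precisely the growth rate of $1/\Gamma$ reflected, i.e. of $\Gamma(s)$ paired with $\Gamma(3-s)$ via the reflection formula $\Gamma(s)\Gamma(1-s)=\pi/\sin(\pi s)$). The cleanest route is in fact to use the reflection formula directly: $\Gamma(s)\Gamma(3-s)=(1-s)(2-s)\,\pi/\sin(\pi s)\cdot(-1)$ up to elementary factors, which has modulus comparable to $|\Im s|^2 e^{-\pi|\Im s|}\cdot$const... wait, one must be careful with signs of the exponent, and it is exactly this bookkeeping — reconciling $e^{+3\pi|\Im s|}$ from the hypothesis with the $e^{-\pi|\Im s|}$ decay of $\pi/\sin(\pi s)$ — that makes the argument work: the product $G(s)/(\Gamma(s)\Gamma(3-s))$ is then bounded on the boundary of $\overline S_1$ by $\cO(e^{4\pi|\Im s|})$, is entire and $\ZZ$-periodic after the functional equation is used, and a final, genuinely delicate Phragm\'en--Lindel\"of plus Liouville argument pins it down to a constant, whence $F\equiv 0$. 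I expect the write-up to spend most of its length on getting these exponential constants to line up.
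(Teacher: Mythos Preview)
The paper does not actually prove this theorem; it only states Fuglede's result and remarks that his argument ``makes use of Stirling asymptotic formula and the reflection formula for the Gamma function.'' There is therefore no proof in the paper against which to compare your attempt.

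As for the attempt itself: the broad strategy --- form $F=f-\Gamma$, show it is entire via the functional equation, bound it on the strip using Stirling, and analyse the $\ZZ$-periodic quotient $F/\Gamma$ as a Laurent series in $e^{2\pi i s}$ --- is reasonable and consonant with the tools Fuglede is said to use. But the proof is not completed. The decisive step, eliminating the coefficients $c_k$ in $F/\Gamma=\sum_k c_k e^{2\pi i k s}$, is left hanging: you correctly observe that a term with $|k|=1$ contributes growth of order $e^{3\pi|\Im s|/2}$, which is \emph{compatible} with the hypothesis, so the growth bound alone does not rule it out; the single normalisation $F(1)=0$ then leaves two free parameters among $c_{-1},c_0,c_1$, and you supply no further argument to force them to zero. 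The later passages (``wait, one must be careful with signs of the exponent'', ``I expect the write-up to spend most of its length on getting these exponential constants to line up'') are an honest acknowledgement that the bookkeeping has not been carried out. Note also that the paper's stated bound carries $e^{\frac{3\pi}{2}\Im s}$, not $e^{\frac{3\pi}{2}|\Im s|}$ as you write throughout; an asymmetric hypothesis changes the Laurent-coefficient count and may be precisely what closes the gap you have identified.
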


To be precise, Fuglede gives three weaker more technical conditions that implie this one, and proves that this condition is 
also optimal. In his paper Fuglede makes use of Stirling asymptotic formula and the reflection formula for the Gamma function.

\medskip

In an apparently unpublished manuscript, Smith (2006, \cite{Sm}) proves  the following characterization of Euler Gamma function:

\begin{theorem}[Smith, 2006]
 Let $f$ be a meromorphic function on $\CC_+$ such that $1/f(s)=\cO (e^{c |s|})$, with $c<\pi$, and 
$$
f(s+1)=sf(s)
$$
then $f=\Gamma$.
\end{theorem}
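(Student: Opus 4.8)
The plan is to compare $f$ directly with the Euler Gamma function supplied by Theorem~\ref{thm:main-Euler-gamma} and to show that the ratio is forced to be constant by a Liouville/Fourier argument, in the spirit of Wielandt's and Fuglede's proofs but fed only by the exponential-type bound $1/f=\cO(e^{c|s|})$.

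First I would observe that $g:=1/f$ is in fact \emph{holomorphic} on $\CC_+$: a meromorphic function satisfying $1/f=\cO(e^{c|s|})$ is locally bounded, so it has no poles, which means $f$ has no zeros in $\CC_+$ and $g$ is holomorphic on all of $\CC_+$ of exponential type $\le c$. Iterating $f(s+1)=sf(s)$ extends $f$ meromorphically to all of $\CC$. Now put
$$
\phi(s):=\frac{\Gamma(s)}{f(s)}=\Gamma(s)\,g(s).
$$
Because $\Gamma$ has neither zeros nor poles in $\CC_+$ while $g$ is holomorphic there, $\phi$ is holomorphic on $\CC_+$; combining $\Gamma(s+1)=s\,\Gamma(s)$ with $f(s+1)=s\,f(s)$ gives $\phi(s+1)=\phi(s)$ for $\Re s>0$, so $\phi$ is $\ZZ$-periodic and hence extends to an entire function on $\CC$.

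The one genuine point is the growth of $\phi$, and I would extract it on the vertical strip $\overline S=\{\,1\le\Re s\le 2\,\}\subset\CC_+$. Stirling's formula on vertical lines (equivalently, the reflection formula together with $|\Gamma(\tfrac12+iy)|^2=\pi/\cosh\pi y$) gives $|\Gamma(x+iy)|\asymp|y|^{\,x-1/2}e^{-\pi|y|/2}$ uniformly for $1\le x\le 2$ as $|y|\to\infty$, so for every $\delta>0$
$$
|\Gamma(s)|=\cO\!\left(e^{(-\pi/2+\delta)|s|}\right)\qquad(s\in\overline S),
$$
where I used $|s|\le|\Im s|+2$ on $\overline S$. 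Multiplying by the hypothesis $|g(s)|=\cO(e^{c|s|})$ on $\CC_+$ gives $|\phi(s)|=\cO(e^{(c-\pi/2+\delta)|s|})$ on $\overline S$; reducing $\Re s$ modulo $1$ into $[1,2)$ by $\ZZ$-periodicity and using $|s|\le|\Im s|+2$ once more,
$$
|\phi(x+iy)|=\cO\!\left(e^{A|y|}\right)\ \text{ on }\CC,\qquad A:=c-\tfrac{\pi}{2}+\delta .
$$
Since $c<\pi$ we may fix $\delta$ so small that $A<\tfrac\pi2<2\pi$. I expect this step to be the main obstacle: everything else is formal, but one really does need the exponential \emph{decay} of $\Gamma$ on a vertical strip, which is the same classical input (Stirling) used in Fuglede's proof, and care is needed with the subexponential polynomial factor and with the comparison of $|s|$ and $|\Im s|$.

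Finally I would conclude that $\phi$ is constant. Writing $q=e^{2\pi i s}$ turns the $\ZZ$-periodic entire function $\phi$ into a holomorphic function $\Phi$ on $\CC^{\ast}$ with $|\Phi(q)|=\cO\!\big(\max(|q|,|q|^{-1})^{A/2\pi}\big)$; estimating the Laurent coefficients $a_n=\tfrac1{2\pi i}\oint_{|q|=\rho}\Phi(q)\,q^{-n-1}\,dq$ by letting $\rho\to\infty$ for $n>0$ and $\rho\to0$ for $n<0$, and using $A/2\pi<1$, forces $a_n=0$ for all $n\neq0$. Hence $\phi\equiv a_0$, i.e. $f=a_0^{-1}\Gamma$, and the normalization $f(1)=1$ (carried, as in Wielandt's and Fuglede's statements above) gives $a_0=\Gamma(1)=1$, so $f=\Gamma$. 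Alternatively, once $\phi$ is known to be an entire function of finite order one may simply repeat the Liouville-type argument from the uniqueness part of the proof of Theorem~\ref{thm:main-Euler-gamma}.
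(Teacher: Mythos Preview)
Your argument is correct (with the normalization $f(1)=1$ that you rightly supply; without it the conclusion can only be $f=f(1)\,\Gamma$). The paper does not actually spell out a proof of this theorem; it only records that Smith's original argument goes through Carlson's Theorem, recalled later in Section~3. That route applies Carlson to $h(s)=1/f(s)-1/\Gamma(s)$ on (a translate of) $\overline{\CC_+}$: the two functional equations force $h(n)=0$ for every positive integer $n$, the hypothesis gives exponential type $c<\pi$ for $1/f$, Stirling gives type $\pi/2$ for $1/\Gamma$ on the closed half-plane, and Carlson then yields $h\equiv 0$. Your approach is genuinely different and closer in spirit to Fuglede's: you exploit the $\ZZ$-periodicity of $\phi=\Gamma/f$, feed in the Stirling \emph{decay} of $\Gamma$ on a vertical strip to bound $|\phi|$ by $e^{A|\Im s|}$, and kill the nonconstant Fourier modes. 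Both routes consume the same analytic input (vertical asymptotics of $\Gamma^{\pm 1}$), but yours avoids importing Carlson as a black box and makes explicit that the crude threshold $A<2\pi$ already suffices, far weaker than what $c<\pi$ actually delivers. One small caveat on your closing ``alternative'': the uniqueness argument of Theorem~\ref{thm:main-Euler-gamma} writes the ratio as $e^{A(s)}$ and so presupposes $\phi$ is zero-free, whereas a priori $f$ might have poles in $\CC_+$; your primary Fourier/Laurent argument needs no such assumption and is the clean way to finish.
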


Smith states the result with $c=\pi$, but gives an sketchy proof using a Theorem of Carlson that requires $c<\pi$.
Note that the bound forbids $f$ of having zeros in $\CC_+$. This characterization is similar to the previous ones in a vertical strip, 
weaker than Weilandt's, but stronger than Fuglede's. 
The proof is based on Carlson's Theorem that we will review later.

\medskip

Our definition is closer in the spirit to these characterizations and  weakens in an important way  
Wielandt, Fuglede and Smith bounds by allowing a growth of finite order function (i.e. 
as exponential of a polynomial) in the vertical strip, in the positive half plane $\CC_+$, or in a cone around the 
positive real axes $\RR_+$. Fuglede's estimates being optimal, we need to add some extra assumptions. 
In our case we assume that $\Gamma'(1)\in \RR$, and that 
the divisor is confined to $\CC-\CC_+$.
Note that Wielandt's condition includes that there are no poles in the right 
half plane. Our condition is somewhat more  symmetrical from the point of view of 
meromorphic functions. Indeed, we will see that Wielandt's and Fuglede's estimates prove that the function has no zeros 
on the right half plane (for Smith estimate it is straightforward), hence their bounds implies our condition on the divisor.

\subsection{Functional equation plus log-convexity: Bohr-Mollerup's characterization}

H. Bohr and J. Mollerup prove (as an exercice in his Calculus book \cite{BoMo}, 1922) 
that the real Gamma function is characterized by the functional equation and log-convexity, more precisely:

\begin{theorem}[Bohr-Mollerup, 1922]
Let $f:\RR_+\to \RR_+$ be a positive function, $f(1)=1$, such that $\log f$ is convex and 
$$
f(x+1)=xf(x)
$$
then $f=\Gamma_{/\RR}$.
\end{theorem}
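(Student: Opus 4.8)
The plan is to prove uniqueness by a squeezing argument built from log-convexity, after first checking that the restriction $\Gamma_{/\RR}$ is itself one of the admissible functions.

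First I would verify that $\Gamma$ restricted to $\RR_+$ satisfies the three hypotheses. Positivity on $\RR_+$ and the relations $\Gamma(1)=1$, $\Gamma(x+1)=x\Gamma(x)$ are immediate: in the product of Corollary~\ref{cor:W-product}, namely $\Gamma(s)=s^{-1}e^{-\gamma s}\prod_{n\ge 1}(1+s/n)^{-1}e^{s/n}$, every factor is strictly positive when $s=x>0$, and the functional equation and normalization are part of Theorem~\ref{thm:main-Euler-gamma}. Log-convexity follows by differentiating $\log\Gamma$ term by term in that product (the series of second derivatives converges locally uniformly on $\RR_+$): one gets $(\log\Gamma)''(x)=\sum_{n=0}^{+\infty}(x+n)^{-2}>0$, so $\log\Gamma$ is strictly convex on $\RR_+$. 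Hence $\Gamma_{/\RR}$ is admissible, and it remains to show there is no other admissible function.

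Now let $f$ be any function as in the statement and set $\varphi=\log f:\RR_+\to\RR$, a convex function with $\varphi(1)=0$ and $\varphi(x+1)=\log x+\varphi(x)$. Iterating the functional equation gives $\varphi(x+n+1)=\varphi(x)+\sum_{k=0}^{n}\log(x+k)$ for every $n\in\NN$; in particular $\varphi(n+1)=\log(n!)$, and $f$ is completely determined on $\RR_+$ by its restriction to $(0,1]$. Fix $x\in(0,1)$ and a positive integer $n$. Since the difference quotients of the convex function $\varphi$ are nondecreasing, comparing them over the nested intervals determined by the nodes $n<n+1<n+1+x<n+2$ yields
$$
\log n \;=\; \varphi(n+1)-\varphi(n)\;\le\;\frac{\varphi(n+1+x)-\varphi(n+1)}{x}\;\le\;\varphi(n+2)-\varphi(n+1)\;=\;\log(n+1).
$$
Inserting $\varphi(n+1)=\log(n!)$ together with $\varphi(n+1+x)=\varphi(x)+\log\bigl(x(x+1)\cdots(x+n)\bigr)$ and exponentiating, one obtains
$$
\frac{n!\,n^{x}}{x(x+1)\cdots(x+n)}\;\le\;f(x)\;\le\;\frac{n!\,(n+1)^{x}}{x(x+1)\cdots(x+n)}.
$$

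Finally I would let $n\to+\infty$. The ratio of the upper to the lower bound is $(1+1/n)^{x}\to 1$, so the two bounds share a common limit and $f(x)$ is squeezed onto it; that is, $f(x)=\lim_{n\to+\infty} n!\,n^{x}/\bigl(x(x+1)\cdots(x+n)\bigr)$, the Euler--Gauss product \eqref{eqn:property3}. Since $\Gamma_{/\RR}$ was shown to satisfy the very same hypotheses, the identical estimate pins $\Gamma(x)$ between the same two bounds, whence $f(x)=\Gamma(x)$ for all $x\in(0,1)$; both sides then agree at $x=1$ and, by the shared functional equation, at every $x>1$, so $f=\Gamma_{/\RR}$. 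The only genuinely delicate point is the very first step, extracting log-convexity of $\Gamma$ from the tools already available, and for that the term-by-term second derivative of the Weierstrass product of Corollary~\ref{cor:W-product} settles it cleanly; the squeezing argument itself is then routine bookkeeping with convex difference quotients.
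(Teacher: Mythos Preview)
The paper does not actually supply a proof of the Bohr--Mollerup theorem: it is quoted in the historical survey (Section~2.10) as a result of Bohr and Mollerup, with a reference to Artin's monograph, and is not revisited in Section~3 where the author derives classical properties from the new definition. So there is no ``paper's own proof'' to compare against.

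That said, your argument is correct and is precisely the classical Bohr--Mollerup/Artin proof. The verification that $\Gamma_{/\RR}$ itself is admissible is done in a manner compatible with the paper's development: you invoke Corollary~\ref{cor:W-product} for positivity and for the termwise computation $(\log\Gamma)''(x)=\sum_{n\ge 0}(x+n)^{-2}>0$, and you identify the squeezed limit with the Euler--Gauss product already established in Corollary~\ref{cor:product_formula}. The convexity squeeze on $(0,1)$ and the extension via the functional equation are standard and carried out without error. If anything, you could shorten the last paragraph: once the squeeze gives $f(x)=\lim_{n}n!\,n^{x}/(x(x+1)\cdots(x+n))$ for $x\in(0,1)$, Corollary~\ref{cor:product_formula} immediately identifies this limit as $\Gamma(x)$, so there is no need to rerun the argument for $\Gamma$ separately.
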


We get in this way a perfectly satisfactory characterization from the point of view of purely real analysis.

E. Artin made of Bohr-Mollerup theorem the starting point of his theory 
of the Gamma function that is presented in his monograph (1931, \cite{Ar}). Artin solved the difficult problem of the 
intricate historical bibliography by not citing any of his predecessors. To prove that the unique solution is 
real analytic and extends to a meromorphic function on $\CC$ one needs to do some extra work. Artin proves the 
existence part and the analytic extension by using Euler's integral formula and proving the log-convexity. Hence, 
his approach is not really independent from Euler's integral definition.
In the extension to higher Barnes Gamma 
functions, also characterized by log-convexity by Vign\'eras (1979, \cite{Vi}), the existence and analytic extension is 
proved by identifying their Weierstrass factorization. These authors could have taken directly these other 
more analytic definitions, since after all they need them to justify the real analytic character of the solution.

\medskip

Bohr-Mollerup  approach, by its elementary nature, became popular among non-complex analysts, in 
particular among algebrists, but it does not feel to be  
the natural point of view from complex analysis. Remmert makes this point  
in his article \cite{Rem1} and book \cite{Rem2} when presenting Wieland's characterization, 
that, he though of as the more natural from the complex analyst point of view (he mentions also the article of 
D. Birkhoff expressing similar views). Bohr-Mollerup approach requires the real analyticity condition that 
is not needed in purely complex characterizations. Moreover, the positivity assumption implicitly requires 
that no zeros, nor poles lie on the positive real axes. 

\medskip

Bourbaki (1949, \cite{Bou}, Chapter VII) adopts  also this approach to characterize the Gamma function.
One cannot dispute that 
the complex extension of the Gamma function is fundamental, hence, from a categorical point of view, dear 
to algebrists (pun intended), we 
should seek a natural complex analytic definition. Remmert in the Historical Introduction of 
his ``Theory of complex functions'' \cite{Rem3}
makes this point by reminding two memorable early quotes from Gauss that is worth recalling. In a letter 
to Bessel dated December 18th 1811 
(see \cite{Rem3} p.1, \cite{Ga2})
\medskip

\textit{``At the very beginning I would ask anyone who wants to introduce a new function into analysis 
to clarify whether he intends to confine it to real magnitudes (real values of its argument) and regard 
the imaginary values as just vestigial - or whether he subscribes to my fundamental proposition that in the 
realm of magnitudes the imaginary ones $a+b\sqrt{-1} =a+bi$ have to be regarded as enjoying equal rights 
with the real ones. We are not talking about practical utility here; rather analysis is, to my mind, 
a self-sufficient science. It would lose immeasurably in beauty and symmetry from the rejection of any 
fictive magnitudes. At each stage truths, which otherwise are quite generally valid, would have to be 
encumbered with all sorts of qualifications...``}

\medskip

Gauss also writes (according to Remmert sometime after 1831, \cite{Rem3} p.2)

\medskip

\textit{``Complete knowledge of the nature of an analytic function must also include insight into
its behavior for imaginary values of the arguments. Often the latter is indispensable even for a proper 
appreciation of the behavior of the function for real arguments. It is therefore essential that the 
original determination of the function concept be broadened to a domain of magnitudes which includes
both the real and the imaginary quantities, on an equal footing, under the single designation complex 
numbers.``}

\medskip

These quotes illustrate very accurately Remmert's reservations regarding Bohr-Mollerup definition.

\subsection{Functional equation plus more symmetries}

Davis \cite{Da} p.867 writes, without providing references,

\medskip

\textit{``By the middle of the 19th century it was recognized that Euler's gamma function was the only 
continuous function which satisfied simultaneously the recurrence relationship, the reflection formula 
and the multiplication formula.''}

\medskip

We can find such a characterization in Remmert's book \cite{Rem2}, p.46,

\begin{proposition}
Suppose that $F$ is a meromorphic function in $\CC$, positive in $\RR_+^*$, and satisfies the functional equation
$$
F(s+1)=sF(s)
$$
and the duplication formula
\begin{equation}\label{eqn:property8}
\sqrt{\pi} F(2s) = 2^{2s-1} F(s)F(s+1/2) \ .
\end{equation}
Then $F=\Gamma$.
\end{proposition}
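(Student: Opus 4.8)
The plan is to study the quotient $\phi(s)=F(s)/\Gamma(s)$ and to show that it is identically $1$. First I would read off the divisor of $F$ from the hypotheses: positivity on $\RR_+^*$ says $F$ has no zeros or poles there, and iterating $F(s+1)=sF(s)$ in both directions shows that $F$ is finite and of constant sign on each interval $(-n-1,-n)$, has a simple pole with nonzero residue at every point of $-\NN=\{0,-1,-2,\dots\}$, and has no zeros on $\RR$ whatsoever. Zeros and poles of a meromorphic function are isolated in $\CC$, and those of $F$ are essentially invariant under $s\mapsto s+1$, so they lie on finitely many horizontal lines; hence there is $\delta>0$ such that the strip $\{|\Im s|<\delta\}$ contains no zeros of $F$ and no poles of $F$ other than the points of $-\NN$. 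Since $\Gamma$ has exactly the same local behaviour there, $\phi=F/\Gamma$ is holomorphic and zero-free on $\{|\Im s|<\delta\}$, and the sign computation gives $\phi>0$ on all of $\RR$.

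Next I would transport the two functional equations to $\phi$. The recursion gives $\phi(s+1)=\phi(s)$; and since $\Gamma$ itself satisfies the classical Legendre duplication formula $\sqrt\pi\,\Gamma(2s)=2^{2s-1}\Gamma(s)\Gamma(s+\tfrac12)$, dividing the duplication formula for $F$ by it gives $\phi(2s)=\phi(s)\,\phi(s+\tfrac12)$. Being $\ZZ$-periodic and meromorphic, $\phi$ factors as $\phi(s)=\Psi(e^{2\pi i s})$ with $\Psi$ meromorphic on $\CC^*$, holomorphic and zero-free on the annulus $A=\{e^{-2\pi\delta}<|q|<e^{2\pi\delta}\}$ and positive on $|q|=1$, and the two relations collapse to the single identity $\Psi(q^2)=\Psi(q)\,\Psi(-q)$.

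The core step is to prove $\Psi$ constant. Because $\Psi>0$ on the unit circle its winding number there is $0$, so (after shrinking $\delta$) a single-valued holomorphic branch $L=\log\Psi$ exists on $A$ and is real on $|q|=1$; then $e^{L(q^2)}=e^{L(q)+L(-q)}$ forces $L(q^2)-L(q)-L(-q)$ to be a constant in $2\pi i\ZZ$, which vanishes since this expression is real on $|q|=1$. Comparing Laurent coefficients in $L(q^2)=L(q)+L(-q)$, with $L(q)=\sum_{n\in\ZZ}c_nq^n$ convergent on $A$, yields $c_m=2c_{2m}$ for every $m\in\ZZ$. Iterating, $c_j=2^{m}c_{2^{m}j}$ for every $j\neq0$ and all $m\ge1$; since the Laurent coefficients $c_n$ decay geometrically as $|n|\to\infty$ while $|2^m j|\to\infty$, letting $m\to\infty$ forces $c_j=0$. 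Hence $L\equiv c_0$ is constant, so $\Psi$, and therefore $\phi$, is a positive real constant $\lambda$, i.e. $F=\lambda\Gamma$.

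Finally I would determine $\lambda$: substituting $F=\lambda\Gamma$ into the duplication formula for $F$ and using Legendre's formula once more gives $\lambda=\lambda^2$, hence $\lambda=1$ and $F=\Gamma$ (alternatively, once $F=\lambda\Gamma$ is known one may appeal to Theorem~\ref{thm:main-Euler-gamma}). I expect the main obstacle to be the middle step: first the bookkeeping needed to guarantee that $\phi$ is genuinely holomorphic and zero-free on a full horizontal strip, so that $\Psi$ has neither an essential singularity nor a zero on $|q|=1$ and the branch $\log\Psi$ exists; and then running the recursion $c_m=2c_{2m}$ against the geometric decay of the $c_n$. It is worth noting that, pleasantly, no \emph{a priori} finite-order hypothesis on $F$ is used here; finite order of $F$ only emerges at the end, from $F=\Gamma$.
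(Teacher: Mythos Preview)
The paper does not actually supply a proof of this proposition; it merely quotes the statement from Remmert's book (\cite{Rem2}, p.~46) and alludes to analogous real-variable arguments in Artin and Hermite. So there is no ``paper's own proof'' to compare against. That said, your argument is essentially the standard one (close to what Remmert and Artin do): form $\phi=F/\Gamma$, observe it is $1$-periodic and satisfies $\phi(2s)=\phi(s)\phi(s+\tfrac12)$, pass to the variable $q=e^{2\pi i s}$, take a logarithm, and kill the nonconstant Fourier/Laurent modes via the recursion $c_n=2c_{2n}$ together with the geometric decay of the coefficients. The endgame $\lambda=\lambda^2$ is also the usual one. So the strategy is correct and complete.

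One genuine slip: your sentence ``those of $F$ are essentially invariant under $s\mapsto s+1$, so they lie on finitely many horizontal lines'' is false as written --- the divisor of $F$ is a union of $\ZZ$-translates of a discrete set in a fundamental strip, and that set may well be infinite (accumulating at $\pm i\infty$), hence infinitely many horizontal lines. Fortunately your \emph{conclusion} is correct and follows from a cleaner argument applied directly to $\phi$ rather than to $F$: $\phi$ is $1$-periodic and meromorphic, and you have already checked $\phi$ has no zero or pole on the compact interval $[0,1]\subset\RR$; by discreteness of the divisor there is a neighbourhood of $[0,1]$ in $\CC$ free of zeros and poles of $\phi$, and periodicity then gives the whole strip $\{|\Im s|<\delta\}$. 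With that adjustment the rest of the argument (winding number zero on $|q|=1$, single-valued $\log\Psi$, the identity $L(q^2)=L(q)+L(-q)$ on a slightly smaller annulus, and the coefficient recursion) goes through as you wrote it. Your closing remark that no finite-order hypothesis on $F$ is needed is also correct and worth keeping.
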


There is a similar result for real functions in chapter 4 in Artin's book \cite{Ar}, and in 
chapter 6 a discussion on this type of characterizations.
Also in Hermite's lessons from 1882, \cite{He}, we can find a similar  characterizations.
They are interesting, but not very ``economical'' since they require a non-trivial 
second functional equation.

\subsection{N\"orlund fundamental solution to difference equations}

N\"orlund build a theory of analytic resolution of difference equations (see \cite{No1} and \cite{No3}) 
that can be used to define $\log \Gamma(s)$ as the ``fundamental solution'' (in N\"orlund's terminology) in 
the right half plane $\{\Re s >0\}$ of the difference equation
$$
\Delta \log \Gamma(s) =\log s
$$
N\"orlund's theory is interesting, but elaborated and not suitable for a direct approach.
It constructs directly the general fundamental solution. This approach, to build directly the fundamental solution
has its origins in the 
work of Euler and his use of Euler-McLaurin formula (see \cite{[E613]} and 
the discussion in \cite{Ayc3} p.19). N\"orlund's approach 
has been continued more recently by Ruijsenaars in his interesting article \cite{Rui1997}.

\subsection{Scope of the various definitions}

The reference for one or another definition depends very much on the approach we take to the theory 
of Euler Gamma function. From the modern point of view, definitions based on characterizations by important 
properties as  the functional equation have prevailed. From the complex analytic point of view, Wielandt 
 definition
is satisfactory and more suitable than the Bohr-Mollerup approach.

\medskip

It is also important, in order to choose the best definition, that the main properties of the 
Gamma function, as well as alternative characterizations, can be derived easily. 
For example, Remmert takes good care in \cite{Rem1} and \cite{Rem2} to derive using Wielandt's 
characterization the main properties of Euler Gamma function. In the next section, we will do the same 
with our definition.

\medskip

One drawback of Wielandt's definition, and maybe the reason it has been unfairly ignored, 
is that it does not  carries over to higher 
Gamma functions. We are particularly interested in higher Gamma functions defined by Barnes. Already in \cite{Ba}, 
Barnes observes that the functional equation plus asymptotic estimate \`a la Weierstrass characterizes 
Barnes $G$-function. Higher Barnes Gamma functions and multiple Gamma functions of Barnes 
are derived \textit{\`a la Lerch} by Barnes from their higher Hurwitz 
zeta functions. This requires the development of the theory for these functions first. The Bohr-Mollerup 
definition is used and generalized by Vign\'eras \cite{Vi} to characterize higher Barnes Gamma functions, but 
for the analytic extension needs to come back to their 
Weierstrass factorization.

\medskip

Our definition of Euler Gamma function from Theorem \ref{thm:main-Euler-gamma} generalizes, with minimal complex analytic machinery,
to higher Gamma functions, including Mellin's Gamma functions, Jackson's $q$-Gamma function, Barnes multiple Gamma functions and Shintani Gamma functions as is
done in the companion article \cite{PM2020}.

\medskip

The reason that explains the success of our definition is that 
it works naturally for the space of finite order meromorphic function. From the transalgebraic point of view, 
finite order meromorphic functions is the natural class of transalgebraic 
functions. It comes without surprise to see  
this definition appearing naturally in the related transalgebraic work \cite{MPM1} (see also \cite{MPM}).

\subsection{On definition \textit{vs.} characterization.}

We point out briefly the subtle difference between a \textit{definition} and a \textit{characterization} of the Gamma function. 
A characterization implicitly assumes that the Gamma function has been defined previously, and we 
prove a uniqueness theorem for some of its properties. A definition does not presupposes that the Gamma function 
has been defined, hence we aim to derive the whole theory from it. Obviously, a characterization can always be turned 
into a definition, but requires the extra work to prove the existence. Also we can use the characterization to develop 
the theory, but then the original definition does not need to be proved and is skipped. This can lead to easier proofs. 
Our point of view is minimal and we focus on definitions from where we develop the whole theory. For each adopted definition there
is a proper order for the proofs.

\section{Derivation of the main properties of Euler Gamma function.}

We derive from our definition the other characterizations or definitions, and the main properties of Euler 
Gamma function.

\medskip

$\bullet$ \textbf{Weierstrass product.}
\medskip

We  already derived in Corollary \ref{cor:W-product} the Weierstrass product 
(section \ref{sec:W-product-def}) for our Gamma function,
$$
 \Gamma(s) =s^{-1}e^{-\gamma s}\prod_{k=1}^{+\infty} \left ( 1+\frac{s}{k}\right )^{-1}e^{s/k} \ .
$$

Conversely, we can define, with Weierstrass,
$$
f(s)=s^{-1}e^{-\gamma s}\prod_{k=1}^{+\infty} \left ( 1+\frac{s}{k}\right )^{-1}e^{s/k} \ .
$$
Then $f$ has no zeros nor poles on $\CC_+$, it is a meromorphic function of finite order (order one),
$f(s+1)=sf(s)$ (by the same computation from the remarks in the introduction), hence 
$$
f(s+1)=e^{-\gamma s}\prod_{k=1}^{+\infty} \left ( 1+\frac{s}{k}\right )^{-1}e^{s/k}
$$
so $f(1)=f(0+1)=1$, and 
$$
\frac{f'(s+1)}{f(s+1)}=-\gamma  +\sum_{k=1}^{+\infty} \frac{1}{k} -\frac{1}{k+s}
$$
so, making $s=0$, $f'(1)=f'(1)/f(1) =-\gamma \in \RR$.

Therefore, the hypothesis of Theorem \ref{thm:main-Euler-gamma} are satisfied and $f=\Gamma$.

\medskip

$\bullet$ \textbf{Product formula.}

\medskip

The product formula definition from section \ref{sec:product-def} follows for our Gamma function from the Weierstrass product 
by a simple computation:

\begin{corollary}[Product formula]\label{cor:product_formula}
For $s\in \CC$, we have
$$
\Gamma(s)=\lim_{n\to +\infty} \frac{n! \,  n^s }{s(s+1)\ldots (s+n)} \ .
$$
\end{corollary}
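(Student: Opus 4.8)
The plan is to derive the product formula directly by reorganizing the partial products of the Weierstrass product of Corollary \ref{cor:W-product}. Write $H_n=\sum_{k=1}^{n}\frac1k$ for the $n$-th harmonic number and set
$$
P_n(s)=s^{-1}e^{-\gamma s}\prod_{k=1}^{n}\left(1+\frac{s}{k}\right)^{-1}e^{s/k}
=\frac{e^{s(H_n-\gamma)}}{s}\prod_{k=1}^{n}\frac{k}{k+s} ,
$$
using $\prod_{k=1}^{n}e^{s/k}=e^{sH_n}$. The product $\prod_{k=1}^{n}\frac{k}{k+s}$ telescopes to $\frac{n!}{(s+1)(s+2)\cdots(s+n)}$, so
$$
P_n(s)=\frac{n!}{s(s+1)\cdots(s+n)}\,e^{s(H_n-\gamma)} .
$$

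Next I would invoke the definition of the Euler--Mascheroni constant recalled in the introduction, $\gamma=\lim_{n\to+\infty}(H_n-\log n)$, which says exactly that $H_n-\gamma=\log n+\varepsilon_n$ with $\varepsilon_n\to 0$. Hence $e^{s(H_n-\gamma)}=n^{s}e^{s\varepsilon_n}$, and for each fixed $s\in\CC$ one has $e^{s\varepsilon_n}\to 1$. Substituting,
$$
P_n(s)=\frac{n!\,n^{s}}{s(s+1)\cdots(s+n)}\,e^{s\varepsilon_n} .
$$

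Finally, the Weierstrass product of Corollary \ref{cor:W-product} converges to $\Gamma(s)$, i.e. $P_n(s)\to\Gamma(s)$ for every $s\notin-\NN$; combining this with $e^{s\varepsilon_n}\to 1$ gives
$$
\Gamma(s)=\lim_{n\to+\infty}\frac{n!\,n^{s}}{s(s+1)\cdots(s+n)}
$$
for $s\notin-\NN$, while at the non-positive integers both sides have simple poles, so the identity persists there as an equality of meromorphic functions (the stray factor $s$ in the denominator being harmless once multiplied through). I do not expect any genuine obstacle here: the only point requiring care is the bookkeeping that links $H_n$, $\log n$ and $\gamma$, and that is precisely the content of the definition of $\gamma$.
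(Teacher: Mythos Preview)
Your argument is correct and follows essentially the same route as the paper: both start from the Weierstrass product of Corollary~\ref{cor:W-product}, regroup the partial products into $\dfrac{n!}{s(s+1)\cdots(s+n)}$ times an exponential, and use the definition of $\gamma$ to replace $e^{s(H_n-\gamma)}$ by $n^s$ in the limit. The paper compresses the two limits into one line by writing $-\gamma s$ directly as $-\bigl(\sum_{k=1}^n\frac1k-\log n\bigr)s$ inside the $\lim_{n\to\infty}$, whereas you isolate the error term $\varepsilon_n$ explicitly; the content is identical.
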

\begin{proof}
We compute,
\begin{align*}
\Gamma(s) &= s^{-1}e^{-\gamma s}\prod_{k=1}^{+\infty} \left ( 1+\frac{s}{k}\right )^{-1}e^{s/k} \\
&=\lim_{n\to +\infty} s^{-1} \exp \left ( -\left (\sum_{k=1}^n \frac{1}{k} -\log n \right )s 
-\left (\sum_{k=1}^n \log(1+s/k) \right )+\left (\sum_{k=1}^n \frac1k \right ) s \right )\\
&=\lim_{n\to +\infty} s^{-1} n^{s} n!\prod_{k=1}^n (s+k)^{-1} \\
&=\lim_{n\to +\infty} \frac{n!}{s(s+1)\ldots (s+n)} \,  n^s 
\end{align*}
\end{proof}

Conversely, we can define, with Euler and Gauss,
$$
f_n(s)=\frac{n!}{s(s+1)\ldots (s+n)} \,  n^s \ .
$$
and
$$
f(s)=\lim_{n\to +\infty} f_n(s) \ .
$$
It is easy to check that the product converges uniformly on compact sets of $\CC-\NN_-$ to a holomorphic function without zeros. Also
$$
f_n(s+1)=\frac{n}{n+1}s f_n(s)
$$
hence when $n\to +\infty$ we get $f(s+1)=sf(s)$. Also $f_n(1)=\frac{n}{n+1}$ so $f(1)=1$. Also we have $f'(1)\in \RR$ since $f$ is real 
analytic because $\overline{f_n(\bar s)}=f_n(s)$.
We  check that $f$ is of finite order to conclude using Theorem \ref{thm:main-Euler-gamma} that $f=\Gamma$.
We have 
\begin{lemma}
 Let $0<a<b<+\infty$, and consider the vertical strip 
$S_{a,b} =\{s\in \CC ; a\leq \Re s \leq b\}$. Then
$f_n$ is bounded in $S_{a,b}$, and more precisely,
$$
||f_n||_{C^0(S_{a,b})} \leq ||f_{n /\RR}||_{C^0([a,b])}
$$
\end{lemma}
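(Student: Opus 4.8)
The plan is to bound $|f_n(s)|$ for $s = \sigma + it$ in the strip $S_{a,b}$ by its value at the real point $\sigma$, using the fact that each factor contributes a modulus that is maximized when $t=0$. Concretely, I would write
$$
|f_n(s)| = \frac{n!\, n^\sigma}{\prod_{k=0}^n |s+k|},
$$
since $|n^s| = n^{\Re s} = n^\sigma$ and $n!$ is real positive. Now compare term by term with $f_n(\sigma) = \dfrac{n!\, n^\sigma}{\prod_{k=0}^n (\sigma+k)}$: because $\sigma \ge a > 0$, each $k \ge 0$ gives $\sigma + k > 0$, and hence $|s+k| = |(\sigma+k) + it| \ge \sigma + k > 0$. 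Therefore $\prod_{k=0}^n|s+k| \ge \prod_{k=0}^n(\sigma+k)$, which yields $|f_n(s)| \le f_n(\sigma)$ for every $s \in S_{a,b}$ with $\Re s = \sigma$.

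From this pointwise inequality the lemma follows immediately: for any $s \in S_{a,b}$, setting $\sigma = \Re s \in [a,b]$,
$$
|f_n(s)| \le f_n(\sigma) = f_{n/\RR}(\sigma) \le \sup_{x\in[a,b]} |f_{n/\RR}(x)| = \|f_{n/\RR}\|_{C^0([a,b])},
$$
and taking the supremum over $s \in S_{a,b}$ gives $\|f_n\|_{C^0(S_{a,b})} \le \|f_{n/\RR}\|_{C^0([a,b])}$. Note $f_n(\sigma) > 0$ for $\sigma \in [a,b]$ since it is a ratio of positive quantities, so the middle equality with the real function is legitimate (no absolute-value subtlety).

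There is essentially no obstacle here; the only thing to be careful about is that the bound requires $a > 0$ so that all the denominators $\sigma + k$ are strictly positive — if $a$ could be $\le 0$ some factor $\sigma+k$ might vanish or be negative and the term-by-term comparison would break. Since the hypothesis stipulates $0 < a < b < +\infty$, this is exactly what is assumed. One could also remark, although it is not needed for the lemma, that on $[a,b]$ the real functions $f_{n/\RR}$ converge uniformly to $\Gamma_{/\RR}$ (which is continuous and positive there), so the right-hand side is bounded uniformly in $n$; combined with the lemma this shows the family $\{f_n\}$ is uniformly bounded on $S_{a,b}$, hence (after the uniform convergence already noted on compacta) that the limit $f$ satisfies a finite-order — indeed bounded-in-strips — estimate, which is precisely what is needed to invoke Theorem~\ref{thm:main-Euler-gamma} and conclude $f = \Gamma$.
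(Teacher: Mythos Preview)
Your proof is correct and follows essentially the same approach as the paper: both arguments use $|n^s|=n^{\Re s}$ together with the termwise inequality $|s+k|\ge \Re s + k$ (valid since $\Re s\ge a>0$) to bound $|f_n(s)|$ by $f_n(\Re s)$, and then take the supremum over the strip. Your additional remarks on uniform convergence and the application to Theorem~\ref{thm:main-Euler-gamma} are accurate but go beyond what the lemma itself requires.
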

\begin{proof}
For $s\in S_{a,b}$,  $\Re s=x \in [a,b]$, we have $|s(s+1)\ldots (s+n)| \geq x(x+1)\ldots (x+n)$. Therefore we have
$$
|f_n(s)|=\left |\frac{n! \, n^s }{s(s+1)\ldots (s+n)}   \right | \leq \frac{n! \, n^x }{x(x+1)\ldots (x+n)}  =|f_n(x)|
$$
\end{proof}

Using the uniform convergence on $[a,b]\in \RR_+^*$ we get: 
\begin{corollary}
Let $0<a<b<+\infty$, and consider the vertical strip 
$S_{a,b} =\{s\in \CC ; a\leq \Re s \leq b\}$. Then
$f$ is bounded in $S_{a,b}$, and more precisely,
$$
||f||_{C^0(S_{a,b})} \leq ||f_{/\RR}||_{C^0([a,b])}
$$
\end{corollary}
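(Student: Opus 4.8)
The plan is to pass to the limit in the previous Lemma. The previous Lemma gives, for each $n$ and each $0<a<b<+\infty$,
$$
\|f_n\|_{C^0(S_{a,b})} \leq \|f_{n/\RR}\|_{C^0([a,b])} .
$$
First I would recall that the product formula definition of $f$ converges uniformly on compact subsets of $\CC - \NN_-$; in particular it converges uniformly on the closed strip $S_{a,b}$, since $S_{a,b}$ is a compact subset of $\CC_+ \subset \CC - \NN_-$ (it is closed and bounded in the real direction, and the estimate of the Lemma also controls the behavior as $|\Im s| \to \infty$, so $S_{a,b}$ is effectively handled). Hence $f_n \to f$ uniformly on $S_{a,b}$, and therefore
$$
\|f\|_{C^0(S_{a,b})} = \lim_{n\to+\infty} \|f_n\|_{C^0(S_{a,b})} .
$$

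Next I would handle the right-hand side. On the real segment $[a,b]$ we likewise have $f_{n/\RR} \to f_{/\RR}$ uniformly (this is the restriction of the uniform convergence on $S_{a,b}$, or simply the classical convergence of the Euler--Gauss product on a compact subset of $\RR_+^*$), so
$$
\lim_{n\to+\infty} \|f_{n/\RR}\|_{C^0([a,b])} = \|f_{/\RR}\|_{C^0([a,b])} .
$$
Combining the two displays with the Lemma's inequality passed to the limit yields
$$
\|f\|_{C^0(S_{a,b})} \leq \|f_{/\RR}\|_{C^0([a,b])} ,
$$
which is exactly the claimed bound, and in particular shows $f$ is bounded on $S_{a,b}$.

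The only point requiring a little care — the ``main obstacle,'' though it is mild — is justifying that $\|\cdot\|_{C^0(S_{a,b})}$ behaves continuously under the uniform limit, i.e. that $\|f_n\|_{C^0} \to \|f\|_{C^0}$; this follows from the general fact that the sup-norm is continuous with respect to uniform convergence ($\big|\,\|f_n\|_\infty - \|f\|_\infty\,\big| \leq \|f_n - f\|_\infty \to 0$), applied on $S_{a,b}$. Strictly speaking one should also note that the suprema are finite, which is precisely the content of the previous Lemma for $f_n$ and is what we are proving for $f$; since each $f_n$ is bounded on $S_{a,b}$ by a uniformly (in $n$) bounded quantity $\|f_{n/\RR}\|_{C^0([a,b])} \to \|f_{/\RR}\|_{C^0([a,b])} < \infty$, no issue arises. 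This completes the proof.
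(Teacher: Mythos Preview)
Your overall strategy --- pass to the limit in the Lemma's inequality --- is the same as the paper's, but there is a real slip in the execution. The vertical strip $S_{a,b}=\{s\in\CC:a\le\Re s\le b\}$ is \emph{not} compact: it is unbounded in the imaginary direction. So you cannot deduce uniform convergence of $f_n\to f$ on $S_{a,b}$ from uniform convergence on compacta, and your parenthetical remark that ``the estimate of the Lemma also controls the behavior as $|\Im s|\to\infty$'' does not help either: a uniform bound on $|f_n|$ is not the same as uniform convergence of $f_n$ to $f$. Consequently the identity $\|f\|_{C^0(S_{a,b})}=\lim_n\|f_n\|_{C^0(S_{a,b})}$ is not justified.

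The fix is that you do not need uniform convergence on the strip at all. The proof of the Lemma actually gives the pointwise inequality $|f_n(s)|\le f_n(\Re s)$ for every $s\in S_{a,b}$. For each fixed $s$ the convergence $f_n(s)\to f(s)$ holds (pointwise limits follow from uniform convergence on compacta), so letting $n\to\infty$ yields $|f(s)|\le f(\Re s)\le \|f_{/\RR}\|_{C^0([a,b])}$, using only the uniform convergence on the compact interval $[a,b]\subset\RR_+^*$ for the last step. Taking the supremum over $s\in S_{a,b}$ gives the claim. This is precisely what the paper's one-line proof (``using the uniform convergence on $[a,b]$'') is invoking.
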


Using as region $\Omega$ in Proposition \ref{prop:domain} a vertical strip of width larger than 1, we get that $f$ is of finite order.
Hence we have $f=\Gamma$ and in particular we have proved:

\begin{corollary}\label{cor:bounded}
Let $0<a<b<+\infty$, and consider the vertical strip 
$S_{a,b} =\{s\in \CC ; a\leq \Re s \leq b\}$. We have that 
$\Gamma$ is bounded in $S_{a,b}$, and more precisely,
$$
||\Gamma||_{C^0(S_{a,b})} \leq ||\Gamma_{/\RR}||_{C^0([a,b])}
$$
\end{corollary}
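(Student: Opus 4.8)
The plan is to read the statement off from what has just been done. The approximants $f_n(s)=n!\,n^{s}/\bigl(s(s+1)\cdots(s+n)\bigr)$ have been shown to converge locally uniformly on $\CC-\NN_-$ to a function $f$ satisfying all the hypotheses of Theorem~\ref{thm:main-Euler-gamma}, hence $f=\Gamma$. First I would invoke the Lemma, which gives the strip bound $\|f_n\|_{C^0(S_{a,b})}\le\|f_{n/\RR}\|_{C^0([a,b])}$ for every $n$, and then pass to the limit (as in the corollary immediately preceding the present statement) to obtain $\|f\|_{C^0(S_{a,b})}\le\|f_{/\RR}\|_{C^0([a,b])}$. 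Since $f=\Gamma$, and in particular $f_{/\RR}=\Gamma_{/\RR}$ on $[a,b]$, this is exactly the claimed inequality.

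An alternative that does not route through the limit corollary is to estimate $\Gamma$ directly from the product formula of Corollary~\ref{cor:product_formula}. For $s=x+iy$ with $x=\Re s\in[a,b]$ one has $|s+k|\ge\Re(s+k)=x+k>0$ for all integers $k\ge0$, so
$$
|\Gamma(s)|=\lim_{n\to+\infty}\frac{n!\,n^{x}}{|s(s+1)\cdots(s+n)|}\le\lim_{n\to+\infty}\frac{n!\,n^{x}}{x(x+1)\cdots(x+n)}=\Gamma(x);
$$
taking the supremum over $x\in[a,b]$ on the right and over $s\in S_{a,b}$ on the left yields the bound. In fact the reverse inequality is trivial since $[a,b]\subset S_{a,b}$, so the two sup-norms coincide; the corollary only records the direction that is used later.

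The single point that carries real content is the identification $f=\Gamma$, which hinges on the finite-order property: the strip bound shows that $f$ is bounded, hence of finite order, on a vertical strip of width larger than $1$, and Proposition~\ref{prop:domain} then upgrades this to a finite-order meromorphic extension to all of $\CC$; with that in hand Theorem~\ref{thm:main-Euler-gamma} forces $f=\Gamma$. This is the step I would present most carefully. Everything else — the termwise comparison $|s+k|\ge x+k$ and the passage to the limit — is routine, so I do not expect any genuine obstacle once the finite-order estimate is in place.
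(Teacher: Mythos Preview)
Your proposal is correct and matches the paper's argument exactly: the paper proves the Lemma for $f_n$, passes to the limit to get the bound for $f$, then invokes Proposition~\ref{prop:domain} on a strip of width $>1$ to obtain finite order and Theorem~\ref{thm:main-Euler-gamma} to conclude $f=\Gamma$, after which the corollary is immediate. Your alternative via Corollary~\ref{cor:product_formula} is the same computation compressed, and your identification of the finite-order step as the only substantive point is spot on.
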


\medskip

$\bullet$ \textbf{Wielandt characterization.}

\medskip

We review carefully Wielandt's characterization that is closer to our definition. We recall the proof 
of Wielandt's theorem (the proof below is a slight simplification of Remmert's proof from \cite{Rem1}).

\begin{theorem}[Wielandt, 1939]
There is at most a unique function $f$ that is holomorphic in $\CC_+$, $f(1)=1$, bounded in $\overline{S}_{1,2}$, such that 
$$
f(s+1)=sf(s)
$$
\end{theorem}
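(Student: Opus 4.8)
The plan is to run the classical Liouville-type argument (this is essentially Remmert's proof). Suppose $f_1$ and $f_2$ both satisfy the hypotheses and set $h=f_1-f_2$. Then $h$ is holomorphic on $\CC_+$, satisfies $h(s+1)=s\,h(s)$, is bounded on $\overline S_{1,2}$, and — this is the only place the normalization enters — $h(1)=0$.

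First I would show $h$ extends to an \emph{entire} function. Iterating the functional equation backwards, $h(s)=h(s+n)/\bigl(s(s+1)\cdots(s+n-1)\bigr)$ exhibits $h$ as a meromorphic function on $\CC$ whose only possible poles are simple poles at $0,-1,-2,\dots$. The residue at $0$ is $\lim_{s\to 0}s\,h(s)=h(1)=0$, so $h$ is holomorphic at $0$; since $-1,-2,\dots$ are nonzero, the functional equation then propagates holomorphy to the left and $h$ is entire. Next I would bound $h$ on the strip $\overline S_{0,1}$: for $s\in\overline S_{0,1}$ with $|\Im s|\ge 1$ we have $|s|\ge 1$ and $s+1\in\overline S_{1,2}$, so $|h(s)|=|h(s+1)|/|s|\le M$ with $M=\sup_{\overline S_{1,2}}|h|$; on the compact remaining piece $\{0\le\Re s\le 1,\ |\Im s|\le 1\}$ the entire function $h$ is bounded; hence $|h|\le M'$ on $\overline S_{0,1}$ for some $M'$.

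Then I would introduce $H(s)=h(s)\,h(1-s)$, which is entire and, since $1-s\in\overline S_{0,1}$ whenever $s\in\overline S_{0,1}$, satisfies $|H|\le (M')^2$ on $\overline S_{0,1}$. Using the functional equation at the argument $-s$, namely $h(1-s)=-s\,h(-s)$ (valid for the entire extension by analytic continuation), one computes
\[
H(s+1)=h(s+1)\,h(-s)=\bigl(s\,h(s)\bigr)\Bigl(-\tfrac1s\,h(1-s)\Bigr)=-H(s),
\]
so $H(s+2)=H(s)$ and the width-$1$ bound upgrades to a bound on $\overline S_{0,2}$, hence on all of $\CC$ by $2$-periodicity. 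By Liouville $H$ is constant, and $H(1)=h(1)h(0)=0$ forces $H\equiv 0$. Since the ring of entire functions has no zero divisors and $z\mapsto h(1-z)$ vanishes identically iff $h$ does, we conclude $h\equiv 0$, i.e. $f_1=f_2$.

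The step I expect to need the most care is the passage from "bounded on a strip of width $1$" to "bounded on $\CC$": width $1$ is not a full period of $H$, and what saves the argument is the anti-periodicity $H(s+1)=-H(s)$, which turns the width-$1$ estimate into a width-$2$ estimate, width $2$ now being a genuine period. The other point to state carefully is that the functional equation, assumed a priori only on $\CC_+$, persists for the entire extension (and in particular at the argument $-s$) — immediate by the identity theorem once $h$ is known to be entire.
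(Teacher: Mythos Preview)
Your argument is correct and is essentially the same as the paper's (which follows Remmert): take the difference $h$ of two solutions, show it is entire and bounded on $\overline S_{0,1}$, form $l(s)=h(s)h(1-s)$, observe $l(s+1)=-l(s)$ so $|l|$ is $1$-periodic hence bounded, and conclude $l\equiv 0$ by Liouville since $l(1)=0$. The only cosmetic difference is that the paper kills the poles of $h$ by computing that both $f_1$ and $f_2$ have residue $(-1)^n/n!$ at $-n$, whereas you start from $h(1)=0$ and propagate leftward---these are equivalent.
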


\begin{proof}
Any such function $f$ extends meromorphically to the whole complex plane using the 
functional equation. Observe also that $f$ is bounded in any strip $S_{a,b}$, for $0<a<a+1<b$, using a 
finite number of times the functional equation (the bound depends on $a$ when $a\to 0$).

Let $f$ and $g$ be two solutions. 
From the functional equation they have the same simple poles at the negative integers, with the same residues, for $n\geq 0$,
$$
\Res_{-n} f = \Res_{-n} g = \frac{(-1)^n}{n!}
$$
The difference $h(s)=f(s)-g(s)$ is an entire function satisfying the functional equation, 
and it is bounded in the strip $\overline{S}_0=\overline{S}_{0,1}$ because it is 
bounded in the compact set $\overline{S}_0 \cap \{|\Im s| \leq 1\}$, and for $|\Im s| \geq 1$ we use $h(s)=s^{-1} h(s+1)$. 
If $l(s)=h(s)h(1-s)$, we have $l(s+1)=-l(s)$ and $|l|$ is $1$-periodic. So
$l$ is bounded on the plane and must be constant by Liouville Theorem. 
But $l(1)=0$ thus $l$ is identically $0$ and $f=g$.
\end{proof}

\medskip

We note that what is proved is only the uniqueness, but not the existence. Remmert in his article  \cite{Rem1} assumes that 
the Euler Gamma function has been previously  defined using Euler integral formula, and from there he checks  
the boundedness property and the other hypothesis.
But if we want to use Wielandt's theorem as the starting point of the theory of the Gamma function, we need to 
prove the existence of the solution \textit{without a prior definition of the Gamma function}. This is what is needed 
to go from ``Wielandt's characterization'' to ``Wielandt's definition''. This is done with the same arguments 
as given in the existence part of Theorem \ref{thm:main-Euler-gamma} as follows:

\begin{itemize}
 \item We build $f$ with a divisor compatible with the functional equation using the Weierstrass product (\ref{eq:W-solution}).
 \item We prove that $f$ is bounded on vertical strips following the path leading to Corollary \ref{cor:bounded}.
\end{itemize}

\medskip

Thus, from Corollary \ref{cor:bounded}, we get that our unique solution in Theorem \ref{thm:main-Euler-gamma} satisfies Wielandt's conditions, and it  is the unique solution in 
Wielandt's Theorem. Conversely, we show directly by elementary means in the next two Propositions 
that Wielandt's conditions implie the conditions 
in our main Theorem \ref{thm:main-Euler-gamma}. To prove, from Euler definition or from Wielandt's assumptions, that the Gamma 
function has no zeros in $\CC$ is not trivial and this was the subject of interesting discussions in the Hermite-Stieltjes correspondence (\cite{HS}, volume II) 
where we can find a direct proof by Stieltjes from Euler integral formula. We give below a proof from Wielandt's assumptions.
It is a sort of ``mini-Riemann Hypothesis'' when one understands that Gamma functions are level $0$ zeta functions. 

\begin{proposition}
 Let $f$ be a function satisfying the conditions of Wielandt's theorem. Then $f$ extends to a  finite order 
 real analytic meromorphic function, in particular $\Gamma'(1)\in \RR$. 
\end{proposition}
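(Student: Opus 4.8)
The plan is to prove three things in sequence: that $f$ extends meromorphically to all of $\CC$, that this extension has finite order, and that $f$ is real analytic; the last of these gives $\Gamma'(1)\in\RR$ for free.

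First I would extend $f$. Rewriting the functional equation backwards as $f(s)=f(s+n)/\bigl(s(s+1)\cdots(s+n-1)\bigr)$ for $n\ge 1$ shows that $f$, holomorphic on $\CC_+$ by hypothesis, continues to a meromorphic function on $\CC$ whose only singularities are simple poles at the non-positive integers — the same observation already made in the recalled proof of Wielandt's theorem.

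The core step is finite order, and here I would feed the situation into Proposition \ref{prop:domain}. Boundedness of $f$ on $\overline{S}_{1,2}$ upgrades, after finitely many applications of the functional equation forward and backward, to boundedness on the wider closed strip $\{1/2\le\Re s\le 5/2\}$; in particular $f$ satisfies a finite order estimate on the open vertical strip $\Omega=\{s\in\CC:1/2<\Re s<5/2\}$, which has width $2>1$. A vertical strip of width larger than $1$ is exactly one of the regions to which Proposition \ref{prop:domain} applies (the integer moving a given $s$ into $\Omega$ grows linearly in $|s|$), so that proposition yields that $f$ is meromorphic of finite order on $\CC$. The one point to watch is that Proposition \ref{prop:domain} needs a region genuinely wider than a single period, so one must first widen $\overline{S}_{1,2}$ past width $1$ — which is precisely what the extra uses of the functional equation buy.

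Finally I would obtain real analyticity from a conjugation symmetry together with Wielandt's uniqueness statement. Put $g(s)=\overline{f(\bar s)}$. Since $s\mapsto\bar s$ preserves both $\CC_+$ and $\overline{S}_{1,2}$, the function $g$ is holomorphic on $\CC_+$ and bounded on $\overline{S}_{1,2}$ with the same bound as $f$; moreover $g(1)=\overline{f(1)}=1$ and $g(s+1)=\overline{\bar s\,f(\bar s)}=s\,g(s)$, so $g$ again satisfies Wielandt's hypotheses. By uniqueness in Wielandt's theorem, $g=f$, i.e. $\overline{f(\bar s)}=f(s)$, so $f$ is real analytic; in particular $f$ is real on $\RR$ away from its poles and $f'(1)\in\RR$. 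Applying this with $f=\Gamma$ — legitimate since $\Gamma$ satisfies Wielandt's hypotheses by Corollary \ref{cor:bounded} — gives $\Gamma'(1)\in\RR$. I do not anticipate a genuine obstacle: the substantive content is already packaged in Proposition \ref{prop:domain} and in Wielandt's uniqueness, and what remains is the bookkeeping of widening a strip and checking that the conjugate function is still a Wielandt solution.
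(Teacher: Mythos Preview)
Your proof is correct and follows essentially the same route as the paper: finite order via Proposition \ref{prop:domain} applied to a vertical strip, and real analyticity via the observation that the conjugate $\overline{f(\bar s)}$ again satisfies Wielandt's hypotheses and so must equal $f$ by uniqueness. You supply more detail than the paper's two-sentence proof, in particular the explicit widening of the strip past width $1$ so that the hypothesis of Proposition \ref{prop:domain} (that $s$ and $s+1$ can lie simultaneously in $\Omega$) is genuinely met.
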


\begin{proof}
The finite order meromorphic extension follows by the same argument as 
in Proposition \ref{prop:domain}. The hypothesis of 
Wielandt's Theorem are invariant under conjugation, \textit{i.e.} the conjugate 
function $\overline{f(\bar s)}$ also satisfies the hypothesis, hence, by uniqueness, the solution is real analytic. 
\end{proof}

We can be more precise about the order, and also prove that $f$ has no zeros:

\begin{proposition}\label{prop:no_zeros}
 Let $f$ be a function satisfying the conditions of Wielandt's theorem. Then the analytic extension of $f$ 
 to a  finite order meromorphic function is of order $1$ and has no zeros on $\CC$. 
\end{proposition}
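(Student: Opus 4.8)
The plan is to extract Euler's reflection formula $f(s)\,f(1-s)=\pi/\sin(\pi s)$ from Wielandt's lone boundedness hypothesis, and then read off both conclusions from it. By the previous proposition $f$ already extends to a real-analytic meromorphic function of finite order; the functional equation together with $f(1)=1$ forces it to be holomorphic off $\{0,-1,-2,\dots\}$, to have a simple pole at each $-n$ with $\Res_{-n}f=(-1)^n/n!$, and to satisfy $f(n)=(n-1)!$ for $n\ge1$.

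First I would put $\psi(s)=f(s)\,f(1-s)$. Writing the functional equation as $f(1-s)=-s\,f(-s)$, one gets $\psi(s+1)=f(s+1)\,f(-s)=s f(s)\cdot(-f(1-s)/s)=-\psi(s)$, so $|\psi|$ is $1$-periodic. A short residue computation, using $f(n)=(n-1)!$ and the residues above, shows that at every integer $\psi$ has a simple pole with the same residue $(-1)^n$ as $\pi/\sin(\pi s)$; hence $E(s):=\psi(s)-\pi/\sin(\pi s)$ is entire and also satisfies $E(s+1)=-E(s)$. Next I would estimate $E$ on the period strip $\{0\le\Re s\le1\}$: for $s$ there with $|\Im s|$ large, both $s+1$ and $2-s$ lie in $\overline{S}_{1,2}$, where $|f|\le M$, so the functional equation gives $|f(s)|\le M/|\Im s|$ and $|f(1-s)|\le M/|\Im s|$, whence $|\psi(s)|=\cO(|\Im s|^{-2})$, while $|\pi/\sin(\pi s)|=\cO(e^{-\pi|\Im s|})$. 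Thus $E(s)\to0$ as $|\Im s|\to\infty$ in the strip; being entire it is bounded there, hence bounded on $\CC$ by the relation $|E(s+1)|=|E(s)|$, hence constant by Liouville, and the decay forces $E\equiv0$. This is the reflection formula.

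From it the conclusions follow quickly. If $f(s_0)=0$, then $\pi/\sin(\pi s_0)=f(s_0)f(1-s_0)$ vanishes unless $f(1-s_0)=\infty$; but $\pi/\sin(\pi s)$ never vanishes, and $f(1-s_0)=\infty$ would force $1-s_0$ to be a non-positive integer, i.e. $s_0=n\ge1$, contradicting $f(n)=(n-1)!\ne0$. Hence $f$ has no zeros on $\CC$. In particular $f$ has no zeros in $\CC_+$, so, together with the previous proposition, all hypotheses of Theorem \ref{thm:main-Euler-gamma} hold and $f=\Gamma$; by Corollary \ref{cor:W-product} this exhibits $f$ through $1/f(s)=s\,e^{\gamma s}\prod_{k\ge1}(1+s/k)\,e^{-s/k}$, a genus-one canonical product, hence of order $1$. (Alternatively, $1/f$ is now entire with zero set $\{0,-1,-2,\dots\}$, of convergence exponent $1$, so $f$ has order $\ge1$, while $f=\Gamma$ gives order $\le1$.)

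The crux — the author's "mini-Riemann hypothesis" — is the reflection formula step: producing a global identity out of boundedness on a single vertical strip. Everything else is bookkeeping with the functional equation. The one point that needs care is that a zero of $f$ could a priori be masked by a pole of $f(1-s)$, which is exactly why one must separately check that $f$ does not vanish at the positive integers.
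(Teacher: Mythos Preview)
Your proof is correct and reaches both conclusions, but by a genuinely different route from the paper. The paper works multiplicatively: it sets $g_0(s)=f(s)\,f(1-s)\,\sin(\pi s)$, a $1$-periodic entire function, and from the strip bound obtains $|g_0(s)|\le C\,|\Im s|^{-2}e^{\pi|\Im s|}$ everywhere, which already gives order~$1$; then, assuming a zero $\rho$ of $f$, it divides $g_0$ by $\sin(\pi(s-\rho))\,\sin(\pi(s-(1-\rho)))$ to produce an entire periodic function decaying like $|\Im s|^{-2}e^{-\pi|\Im s|}$, hence identically zero by Liouville --- a contradiction. You instead work subtractively: matching residues, you show $E(s)=f(s)\,f(1-s)-\pi/\sin(\pi s)$ is entire, anti-periodic, and tends to $0$ on the strip, so $E\equiv 0$ and the reflection formula holds for $f$ itself. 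The absence of zeros is then a one-line consequence (with the necessary check at positive integers, which you handle), and for order~$1$ you appeal to Theorem~\ref{thm:main-Euler-gamma} to identify $f=\Gamma$ and read the order off Corollary~\ref{cor:W-product}.

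Your argument is cleaner for the ``no zeros'' part and delivers the reflection formula as a byproduct (the paper proves it separately, immediately after this proposition, by essentially re-running its $g_0$ argument for $\Gamma$). The paper's argument, on the other hand, establishes order~$1$ intrinsically from Wielandt's bound and the functional equation, without passing through $f=\Gamma$; this is more in line with the proposition's declared purpose of showing directly that Wielandt's hypotheses imply those of Theorem~\ref{thm:main-Euler-gamma}. Your route to order~$1$ is not circular --- Theorem~\ref{thm:main-Euler-gamma} and Corollary~\ref{cor:W-product} are already in hand --- but it is less self-contained.
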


\begin{proof}
Under the given assumptions, we observe that multiplying  $f(s)f(1-s)$ by $\sin (\pi s)$ 
kills the poles that we know are simple because of the functional equation. Again the functional equation shows that 
$g_0(s)=f(s)f(1-s) \sin (\pi s)$ is $1$-periodic entire function. As before, we consider the closed strip 
$\overline{S}_0=\overline{S}_{0,1}$ of width $1$ and symmetric with respect to the vertical 
line $\{\Re s= 1/2 \}$, that is, invariant by $s\mapsto 1-s$. There is some constant $C_0>0$,
such that for $s\in \overline{S}_{0}$, $|\Im s| \geq 1$,
$$
|f(s)|=|s^{-1} f(s+1)|\leq C_0 |\Im s|^{-1} 
$$
and
$$
|f(s)f(1-s)|\leq C_0^2 |\Im s|^{-2} \ .
$$
Therefore, for $s\in \overline{S}_0$, $|\Im s| \geq 1$,
\begin{equation}\label{eq:estimate_g_0}
|g_0(s)|\leq C_0^2 |\Im s|^{-2} e^{\pi |\Im s|} 
\end{equation}
and by $1$-periodicity and the maximum principle we have (\ref{eq:estimate_g_0}) for all $s\in C$, thus $g_0$ and $f$ 
are of order $1$.

Now we prove that $f$ has no zeros and we argue by contradiction. 
Let $\rho\in \CC$ be a zero for $f$. Because of the functional equation $\rho \notin\ZZ$,   
all $\rho+\ZZ$ are zeros of $f$, and all $(1-\rho)+\ZZ$ are zeros of $f(1-s)$. Now we consider
$$
g(s)=\frac {g_0(s)}{\sin(\pi(s-\rho)) \sin(\pi(s-(1-\rho)))}=\frac {f(s)f(1-s)\sin \pi s}{\sin(\pi(s-\rho)) \sin(\pi(s-(1-\rho)))} \ .
$$
which is a $2$-periodic entire function. Observe that for $|\Im s|\geq 1$ there is a constant $C(\rho)>0$ such 
that we have the estimate
$$
\frac{1}{|\sin(\pi(s-\rho))|}\leq C(\rho) e^{-\pi |\Im s|}
$$
hence, combining with estimate (\ref{eq:estimate_g_0}), and using $1$-periodicity and the maximum principle,
this gives, for all $s\in \CC$, with $C_1(\rho )=C_0^2  C(\rho)  C(1-\rho)$,
$$
|g(s)|\leq C_1(\rho )|\Im s|^{-2} e^{-\pi |\Im s|}
$$
Using again $1$-periodicity and the maximum principle we get that $g$ is bounded and by Liouville theorem 
this implies that $g$ is constant, and since $g(s)\to 0$ when $|\Im s|\to +\infty$ we prove that $g$ is 
identically $0$. Contradiction.
\end{proof}

The last arguments applied to the Gamma function defined by Theorem \ref{thm:main-Euler-gamma} proves Euler's reflection formula:

\begin{theorem}[Reflection formula]
 The Gamma function satisfies the functional equation
 \begin{equation}
 \Gamma (s) \Gamma(1-s) =\frac{\pi }{\sin (\pi s)} 
 \end{equation}
\end{theorem}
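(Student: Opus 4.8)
The plan is to mimic the structure of the proof of Proposition~\ref{prop:no_zeros}, but now applied directly to the Gamma function furnished by Theorem~\ref{thm:main-Euler-gamma}, and to identify the resulting periodic entire function explicitly rather than merely showing it vanishes. Concretely, set
$$
g_0(s)=\Gamma(s)\Gamma(1-s)\sin(\pi s).
$$
First I would check, using the functional equation $\Gamma(s+1)=s\Gamma(s)$ together with the identity $\sin(\pi(s+1))=-\sin(\pi s)$, that $g_0(s+1)=g_0(s)$, so $g_0$ is $1$-periodic. Next, since $\Gamma$ has simple poles exactly at the non-positive integers (and $\Gamma(1-s)$ has simple poles exactly at the positive integers), the zeros of $\sin(\pi s)$ at all integers cancel these poles, so $g_0$ is entire. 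By Proposition~\ref{prop:no_zeros} (or directly from Corollary~\ref{cor:bounded} applied in the strip $\overline S_{0,1}$, using $\Gamma(s)=s^{-1}\Gamma(s+1)$ to get the decay $|\Gamma(s)|\le C_0|\Im s|^{-1}$ there), we have the estimate $|g_0(s)|\le C_0^2|\Im s|^{-2}e^{\pi|\Im s|}$ for $s\in\overline S_{0,1}$ with $|\Im s|\ge 1$, and by $1$-periodicity this holds for all $s\in\CC$.

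The key step is then to pin down the constant. Consider $h(s)=g_0(s)/\pi$; it is a $1$-periodic entire function satisfying $|h(s)|=O(e^{\pi|\Im s|})$ as $|\Im s|\to\infty$. A $1$-periodic entire function is a function of $w=e^{2\pi i s}$, i.e. $h(s)=H(e^{2\pi i s})$ with $H$ holomorphic on $\CC^*$; the growth bound $e^{\pi|\Im s|}=|w|^{-1/2}$ as $|\Im s|\to+\infty$ (resp. $|w|^{1/2}$ as $|\Im s|\to-\infty$) forces the Laurent expansion of $H$ to have no negative and no positive powers of $w$, hence $H$, and therefore $h$, is constant. So $g_0\equiv c$ for some constant $c$. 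To evaluate $c$, I would let $s\to 0$: near $s=0$ we have $\Gamma(s)\sim 1/s$, $\Gamma(1-s)\to\Gamma(1)=1$, and $\sin(\pi s)\sim\pi s$, so $g_0(s)\to\pi$. Hence $c=\pi$ and $\Gamma(s)\Gamma(1-s)=\pi/\sin(\pi s)$.

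The main obstacle is the identification of the $1$-periodic bounded-type entire function with a constant: one must justify carefully that the two-sided exponential growth rate $e^{\pi|\Im s|}$ is exactly the borderline that excludes any nonzero Fourier mode $e^{2\pi i k s}$ for $k\neq 0$ (each such mode grows like $e^{2\pi|k|\,|\Im s|}$ on one side), so that only the $k=0$ term survives. Once this Liouville-type/Phragmén--Lindelöf argument for periodic functions is in place — it is exactly the mechanism already used in Proposition~\ref{prop:no_zeros} — the rest is the routine residue computation at $s=0$ to fix the constant, together with the elementary verification that $g_0$ is entire and $1$-periodic.
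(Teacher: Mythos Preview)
Your argument is correct but follows a genuinely different route from the paper. The paper, having already established in Proposition~\ref{prop:no_zeros} that $\Gamma$ has no zeros, observes that $g_0$ is an entire function of order $1$ \emph{without zeros}, hence $g_0(s)=e^{as+b}$ by Hadamard; it then computes $a=0$ by taking the logarithmic derivative of the symmetric expression $g_0(s)=\Gamma(1+s)\Gamma(1-s)\,\tfrac{\sin(\pi s)}{s}$ at $s=0$, and evaluates the constant $e^b=\pi$ from the same limit you use. Your approach bypasses the zero-free input entirely: you use only the $1$-periodicity of $g_0$ together with the strip estimate $|g_0(s)|\le C\,e^{\pi|\Im s|}$, and conclude via the Laurent expansion in $w=e^{2\pi i s}$ that all nonconstant Fourier modes (which grow like $e^{2\pi|k|\,|\Im s|}$, $|k|\ge 1$) are excluded. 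This buys you independence from the no-zeros result, at the cost of a slightly sharper Liouville/Phragm\'en--Lindel\"of step than what is literally used in Proposition~\ref{prop:no_zeros} (there the bound was $e^{-\pi|\Im s|}\to 0$, so the plain Liouville theorem sufficed; here the bound is growing and you genuinely need the Fourier argument). Both routes finish by the same residue computation at $s=0$.
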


\begin{proof}
We construct $g_0$ starting from $f=\Gamma$ as before. Observe that 
$$
g_0(s) =f(s)f(1-s)\sin(\pi s) =f(1+s)f(1-s)\frac{\sin(\pi s)}{s}
$$

We have shown that $g_0$ is of order $1$ and without zeros. Hence, $g_0(s)=e^{as+b}$. 
Taking logarithmic derivatives at $s=0$ in the above formula we get
$$
a=\frac{g_0'(s)}{g_0(s)} = \frac{f'(1)}{f(1)}-\frac{f'(1)}{f(1)} +0 = 0
$$
So $g_0$ is constant. Finally, using $f(1)=1$, we have
$$
g_0(s)=g_0(0)= \lim_{s\to 0} f(s+1) f(1-s) \frac{\sin(\pi s)}{s}=\lim_{s\to 0} \frac{\sin(\pi s)}{s}=\pi 
$$
\end{proof}

Making $s=1/2$ in the Reflection formula we get the value of $\Gamma(1/2)$. 

\begin{corollary}\label{cor:Gamma(1/2)}
 We have 
 $$
 \Gamma (1/2) =\sqrt{\pi} 
 $$
\end{corollary}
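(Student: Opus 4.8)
The plan is to simply specialize the Reflection formula, which has just been established, to the point $s=1/2$. Setting $s=1/2$ in $\Gamma(s)\Gamma(1-s)=\pi/\sin(\pi s)$ gives $\Gamma(1/2)^2 = \pi/\sin(\pi/2) = \pi$, so that $\Gamma(1/2)=\pm\sqrt{\pi}$.

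The only remaining point is to fix the sign. For this I would invoke positivity of $\Gamma$ on the positive real axis: from the product formula (Corollary \ref{cor:product_formula}), $\Gamma(1/2)=\lim_{n\to+\infty} \frac{n!\,n^{1/2}}{(1/2)(3/2)\cdots(1/2+n)}$, and every factor appearing here is strictly positive, so the limit is $\geq 0$; since $\Gamma$ has no zeros on $\CC_+$ (established in Proposition \ref{prop:no_zeros}, or directly from the Weierstrass product in Corollary \ref{cor:W-product}), it is in fact strictly positive. Hence $\Gamma(1/2)=+\sqrt{\pi}$.

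There is essentially no obstacle here — the content is entirely in the Reflection formula, which is already proved; this corollary is just the evaluation $\sin(\pi/2)=1$ together with a sign determination. If one wishes to avoid even citing the product formula for positivity, one may instead note that real-analyticity of $\Gamma$ (Proposition following the Wielandt review, or the conjugation-symmetry argument) together with $\Gamma(1)=1$ and continuity on $(0,+\infty)$, where $\Gamma$ has no zeros, forces $\Gamma>0$ on all of $\RR_+^*$, and in particular $\Gamma(1/2)>0$.
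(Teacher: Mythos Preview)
Your proof is correct and takes exactly the same approach as the paper: the paper simply says ``Making $s=1/2$ in the Reflection formula we get the value of $\Gamma(1/2)$'' and leaves it at that. You have in fact been more careful than the paper by explicitly justifying the choice of sign, which the paper omits.
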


Take note of the remarkable fact that we did not need to compute a single integral in order to obtain the value of $\Gamma(1/2)$.

\medskip

We can see directly that a function $f$ satisfying Wielandt's conditions has no zeros in $\CC_+$ using 
Carlson's Theorem. Wielandt characterization is very close to Carlson Theorem. It is surprising that 
it is not mentioned in Remmert's analysis of Wieland theorem. We recall Carlson's Theorem \cite{Ca}.

\begin{theorem}[Carlson, 1914]
 Let $f:\CC_+\to \CC$ be a holomorphic function extending continuously to $\overline{\CC}_+$, of exponential type, 
 \textit{i.e.} such that for some $C_0, \tau >0$,
 $$
 |f(s)|\leq C_0 e^{\tau |s|} \ .
 $$
 We also assume that we have an more precise exponential control on the imaginary axes, for $y\in \RR$,
 $$
 |f(iy)|\leq Ce^{c\pi}
 $$
 where $c<\pi$.
 
 Then if $f(n)=0$ for $n\in \NN$ then $f$ is identically $0$, $f=0$. 
\end{theorem}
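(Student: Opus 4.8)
The plan is to pass to the auxiliary function $\phi(z)=f(z)/\sin(\pi z)$ and turn the statement into a Phragmén--Lindelöf estimate in the closed right half plane. Assuming, as one may after replacing $f$ by $\frac{z}{z+1}f(z)$ (a bounded factor that preserves all the hypotheses), that $f$ also vanishes at the origin, the simple zeros of $\sin(\pi z)$ at $0,1,2,\dots$ are exactly cancelled, so $\phi$ is holomorphic in $\CC_+$ and continuous on $\overline{\CC_+}$.

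First I would record the two relevant growth estimates for $\phi$. On the imaginary axis $|\sin(\pi i y)|=|\sinh(\pi y)|\geq \tfrac{1}{4}e^{\pi|y|}$ for $|y|\geq 1$, so the refined bound on the imaginary axis, $|f(iy)|=\cO(e^{a|y|})$ with $a<\pi$, gives $|\phi(iy)|\leq C' e^{-\delta|y|}$ for some $\delta>0$; in particular $\phi$ decays exponentially along the boundary. In the interior, combining $|f(z)|\leq C_0 e^{\tau|z|}$ with the same lower bound for $|\sin(\pi z)|$ in the region $|\Im z|\geq 1$, and observing that in the half-strip $\{\Re z>0,\ |\Im z|\leq 1\}$ the function $\phi(z)e^{-\tau z}$ is bounded (there $|z|-\Re z\leq 1$, and $\phi$ is continuous up to the compact boundary segment $\{\Re z=0,\ |\Im z|\le 1\}$), one obtains $|\phi(z)|\leq C_1 e^{\tau|z|}$ throughout $\overline{\CC_+}$. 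Thus $\phi$ is holomorphic in $\CC_+$, continuous on $\overline{\CC_+}$, of finite exponential type there, and exponentially small on the two boundary rays.

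To conclude I would invoke the indicator function $h_\phi(\theta)=\limsup_{r\to\infty} r^{-1}\log|\phi(re^{i\theta})|$. By the classical Phragmén--Lindelöf theory for functions of exponential type in a sector, if $\phi\not\equiv 0$ then $h_\phi$ is finite and trigonometrically convex on $[-\pi/2,\pi/2]$; the boundary estimate forces $h_\phi(\pm\pi/2)\leq-\delta$. Trigonometric convexity on $[-\tfrac{\pi}{2}+\e,\ \tfrac{\pi}{2}-\e]$ evaluated at $\theta=0$ gives $h_\phi(0)\leq \frac{\cos\e}{\sin 2\e}\bigl(h_\phi(-\tfrac{\pi}{2}+\e)+h_\phi(\tfrac{\pi}{2}-\e)\bigr)$, and letting $\e\to 0^+$ — where the coefficient diverges to $+\infty$ while the bracket tends to $h_\phi(-\tfrac{\pi}{2})+h_\phi(\tfrac{\pi}{2})$ — forces $h_\phi(-\tfrac{\pi}{2})+h_\phi(\tfrac{\pi}{2})\geq 0$, contradicting $h_\phi(\pm\pi/2)\leq-\delta<0$. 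Hence $\phi\equiv 0$ and $f=\phi\sin(\pi z)\equiv 0$. A variant avoiding the indicator machinery: Phragmén--Lindelöf applied separately in the quadrants $\{0\le\arg z\le \pi/2\}$ and $\{-\pi/2\le\arg z\le 0\}$ shows that $G(z)=\phi(z)e^{-\mu z}$ is bounded on $\overline{\CC_+}$ for any $\mu>\tau$; transporting $G$ to the unit disc by a Cayley map, its boundary modulus then decays faster than any power of the distance to the point coming from $\pm i\infty$, so $\int\log|G|=-\infty$ on the boundary circle, which is impossible for a nonzero bounded holomorphic function, and again $G\equiv 0$.

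The step I expect to require the most care is the passage through a sector of aperture exactly $\pi$: finite exponential type of $f$ in $\CC_+$ by itself carries no information, and it is only the refined bound on the imaginary axis, fed through the degeneracy $\sin(\theta_2-\theta_1)=0$ at aperture $\pi$ in the trigonometric-convexity inequality (equivalently, through the limiting Phragmén--Lindelöf argument across the two quadrants), that produces the contradiction. The secondary technical point is the control of $\phi$ near the real axis, where $f/\sin(\pi z)$ is a priori of the form $0/0$.
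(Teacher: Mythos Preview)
The paper does not prove Carlson's theorem: it is merely recalled with a citation to Carlson's thesis \cite{Ca}, followed by the one-line remark that $\sin(\pi s)$ shows the constraint $c<\pi$ is sharp. There is therefore no ``paper's own proof'' to compare against. (Note also that the displayed hypothesis $|f(iy)|\le Ce^{c\pi}$ is a typo for $|f(iy)|\le Ce^{c|y|}$; you have interpreted it correctly.)

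Your argument is the classical one --- divide by $\sin(\pi z)$ to cancel the prescribed zeros, then exploit the resulting exponential decay on the imaginary axis through a Phragm\'en--Lindel\"of argument --- and it is correct in substance. This is essentially the proof one finds in Boas' \emph{Entire Functions}, which the paper cites in its Appendix for background. Both the indicator-function route and the Cayley-transform variant you sketch are valid.

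Two places would benefit from an extra line of justification. First, the bound on $\phi$ in the half-strip $\{\Re z\ge 0,\ |\Im z|\le 1\}$ is not immediate from your inequality $|z|-\Re z\le 1$ alone, because $1/\sin(\pi z)$ blows up at the integers; the clean fix is to apply the maximum principle on the rectangles $[n-\tfrac12,n+\tfrac12]\times[-1,1]$, on whose boundaries $|\sin(\pi z)|$ is bounded below by a positive constant independent of $n$. Second, the indicator machinery (trigonometric convexity and continuity of $h_\phi$ up to $\pm\pi/2$) is usually stated for entire functions of exponential type; for a function holomorphic only on $\overline{\CC_+}$ you should either invoke the half-plane version explicitly, or --- cleaner --- run your quadrant-by-quadrant Phragm\'en--Lindel\"of variant, which needs nothing beyond the sector form of the theorem. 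Neither point is a genuine gap.
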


As the function $f(s)=\sin (\pi s)$ shows, we cannot take $c=\pi$.

Observe 
that such a zero $s_0$ gives a sequence of zeros $s_0+n$, $n\geq 0$, hence the function $g(s)=f(s-s_0)$ will vanish at the positive 
integers. If it satisfies the growth conditions in Carlson's Theorem, then $g=0$ which contradicts $g(1+s_0)=f(1)=1$. In order to complete 
the argument we need to establish first Stirling type asymptotics and derive a refinement of Carlson's Theorem like the one proved 
by Pila \cite{Pi}.

\medskip

$\bullet$ \textbf{Hankel integral formulas.}
\medskip

For Hankel integrals we recall that $z^s=e^{s\log z}$ in $\CC-\RR_-$ taking 
the principal branch of the logarithm function.

\begin{theorem}
We have
 $$
 \frac{1}{\G(s)} = \frac{1}{2\pi i} \int_\eta z^{-s}e^z \,  dz
 $$
 $$
\G(s)= \frac{1}{2 i \sin \pi s}\int_\eta z^{s-1} e^z \, dz
 $$
where $\eta$ is the path from Figure 1.
\end{theorem}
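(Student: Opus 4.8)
Set
$$
H(s)=\frac{1}{2\pi i}\int_\eta z^{-s}e^z\,dz .
$$
The plan is to prove $H(s)=1/\G(s)$ for all $s\in\CC$ and then to read off the second identity. First one checks that the integral converges locally uniformly in $s$: at the two ends of $\eta$ one has $z\to-\infty$, so $|e^z|=e^{\Re z}$ decays exponentially and dominates $|z^{-s}|=|z|^{-\Re s}e^{(\arg z)\Im s}$, which grows only polynomially in $|z|$ (since $|\arg z|\le\pi$ along $\eta$); hence $H$ is entire and may be differentiated under the integral sign. Integrating by parts via $z^{-s}e^z=\frac{d}{dz}\bigl(z^{-s}e^z\bigr)+s\,z^{-s-1}e^z$, the boundary contributions at the ends of $\eta$ vanish (as $z^{-s}e^z\to0$ there), and one gets the functional equation $H(s)=s\,H(s+1)$. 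For a non-positive integer $s=-n$ the integrand $z^{n}e^z$ is entire, so closing $\eta$ off at $\Re z=-R$ and letting $R\to\infty$ gives $H(-n)=0$; likewise $H(1)=\Res_{z=0}(e^z/z)=1$ by shrinking $\eta$ to a small loop around $0$.

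Now form $\Phi(s)=H(s)\G(s)$. Since $\G$ has only simple poles, located at the non-positive integers, where $H$ vanishes, $\Phi$ is entire; and $H(s)=sH(s+1)$ together with $\G(s+1)=s\G(s)$ give $\Phi(s+1)=\Phi(s)$, so $\Phi(s)=\Psi(e^{2\pi i s})$ for some $\Psi$ holomorphic on $\CC^{*}$. The quantitative ingredient is the bound $|H(s)|\le C e^{\pi|\Im s|}$ on the strip $\overline S_{1,2}$, obtained by estimating the integral over $\eta$ term by term (the factor $e^{\pi|\Im s|}$ coming from $|\arg z|\le\pi$). Combined with the boundedness of $\G$ on $\overline S_{1,2}$ (Corollary \ref{cor:bounded}) and $1$-periodicity, this yields $|\Phi(s)|\le C' e^{\pi|\Im s|}$ on all of $\CC$, i.e. $|\Psi(w)|\le C'\bigl(|w|^{1/2}+|w|^{-1/2}\bigr)$ on $\CC^{*}$. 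Cauchy's estimates on circles $|w|=r$ with $r\to0$ and $r\to\infty$ then kill every Laurent coefficient of $\Psi$ of nonzero index, so $\Psi$ is constant; evaluating at $s=1$ and using $H(1)=\G(1)=1$ gives $\Phi\equiv1$, that is $1/\G(s)=H(s)$.

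For the second formula, note that $\int_\eta z^{s-1}e^z\,dz=\int_\eta z^{-(1-s)}e^z\,dz=2\pi i\,H(1-s)=2\pi i/\G(1-s)$ for every $s\in\CC$, by the identity just proved. Dividing by $2i\sin\pi s$ and using the reflection formula $\G(s)\G(1-s)=\pi/\sin\pi s$ established above gives
$$
\frac{1}{2i\sin\pi s}\int_\eta z^{s-1}e^z\,dz=\frac{\pi}{\sin(\pi s)\,\G(1-s)}=\G(s)
$$
for $s\notin\ZZ$, and for all $s$ by continuity.

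The step I expect to be the crux is the growth estimate $|H(s)|\le Ce^{\pi|\Im s|}$, which is exactly borderline: any bound $|\Psi(w)|=O(|w|^{\alpha})$ at $\infty$ and $O(|w|^{-\alpha})$ at $0$ with $\alpha<1$ would suffice, and $|\arg z|\le\pi$ on $\eta$ yields precisely $\alpha=\tfrac12$. One must also be careful that $\Phi$ is genuinely entire before propagating the bound by periodicity; the integration-by-parts boundary terms and the contour deformations are routine once the exponential decay of $e^z$ along the ends of $\eta$ is recorded.
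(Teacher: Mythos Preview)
Your argument is correct. The route, however, differs from the paper's. The paper verifies that $1/H$ meets the hypotheses of Theorem~\ref{thm:main-Euler-gamma}: the delicate point is that $H$ has no zeros in $\CC_+$, and this is handled (as in Proposition~\ref{prop:no_zeros}) by forming the $1$-periodic entire function $h_0(s)=H(s)H(1-s)/\sin(\pi s)$, assuming a hypothetical zero $\rho$, dividing out $\sin(\pi(s-\rho))\sin(\pi(s-(1-\rho)))$, and reaching a contradiction via Liouville. Only then is the main characterization invoked. You instead bypass Theorem~\ref{thm:main-Euler-gamma} entirely: you multiply directly by the already-constructed $\Gamma$, use Corollary~\ref{cor:bounded} to bound $\Phi=H\Gamma$ on a strip, and run a Liouville argument on the punctured plane for $\Psi$. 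Your approach is shorter and never needs to know in advance that $H$ is zero-free (that falls out of $\Phi\equiv 1$); the cost is that it leans on Corollary~\ref{cor:bounded} as an external input, whereas the paper's proof stays closer to the spirit of the article by exhibiting Hankel's formula as yet another instance of the characterization Theorem~\ref{thm:main-Euler-gamma}. Both the integration-by-parts functional equation, the residue computations at integers, the strip estimate $|H(s)|\le Ce^{\pi|\Im s|}$, and the derivation of the second formula from the reflection identity are the same in the two proofs.
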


\begin{proof}
Let 
$$
f(s) = \frac{1}{2\pi i} \int_\eta z^{-s}e^z \,  dz 
$$
and we check that $1/f$ satisfies the conditions in Theorem \ref{thm:main-Euler-gamma}. 
We have the estimate,
$$
\left | z^{-s}e^z \right | \leq e^{\pi \Im s} |z|^{-\Re s} e^{\Re z}
$$
and the integral converges for all $s\in \CC$ and $f$ is an entire function. This estimate also 
proves that it is of order $1$ and more precisely that on any  vertical strip
$$
|f(s)|\leq C_0 e^{\pi \Im s}
$$
(the constant $C_0$ depends on the strip).

The differential form
$$
d\left (z^{-s} e^z\right ) =z^{-s}e^z dz -sz^{-(s+1)}e^z dz
$$
is closed, converges to $0$ when $\Im z \to -\infty$, so by integration 
over $\eta$ we get the functional equation $f(s+1)=s^{-1} f(s)$. Also, we have $\overline{f(\bar s)} =f(s)$ 
so $1/f$ is real analytic. The inverse function $1/f$ has no zeros, since $f$ is an entire function.

The only condition that remains to check is that $1/f$ is holomorphic in $\CC_+$, i.e. that 
$f$ has no zeros on $\CC_+$.

We proceed in a similar way as before. Observe that for $s\in \ZZ$, the 
function $z^{-s}$ is meromorphic and the residue formula gives that $f(n)=0$ for $n\in -\NN$ (the integrand 
is holomorphic), and for an integer $n\geq 1$, $f(n)=1/(n-1)!$. Hence the function 
$$
h_0(s)=\frac{f(s)f(1-s)}{\sin(\pi s)}
$$
is a $1$-periodic  entire function. Moreover, we have the estimate on vertical strips
$$
|h_0(s)|\leq C_0 e^{\pi \Im s}
$$
Now, if $f$ has a zero $\rho \in \CC$ then $\rho\notin \ZZ$, $\rho+\ZZ$ are zeros of $f$, $(1-\rho)+\ZZ$ are zeros of $f(1-s)$ and
$$
h(s)=\frac{h_0(s)}{\sin(\pi (s-\rho)) \sin(\pi (s-(1-\rho)))}
$$
is an entire function of period $1$ bounded in vertical strips as
$$
|H(s)|\leq C_0 e^{-\pi \Im s} \ .
$$
Liouville Theorem shows that  $h=0$ as before. So, $1/f$ is holomorphic on $\CC_+$ and 
we can use our main theorem to conclude $f=1/\Gamma$.

For the second integral, we use the reflection formula,
$$
\frac{1}{2 i \sin (\pi s)}\int_\eta z^{s-1} e^z \, dz =\frac{\pi \Gamma (1-s)^{-1}}{ \sin (\pi s)} =\Gamma (s) \ .
$$
\end{proof}

\medskip
$\bullet$ \textbf{Euler integral formula.}
\medskip

We prove now  Euler integral formula for the Euler Gamma function defined by our main Theorem.

%

\begin{theorem} For $\Re s>0$, we have
$$
\Gamma(s) = \int_0^1 \left (\log (1/u)\right )^{s-1} du  =\int_0^{+\infty} t^{s} e^{-t}\, \frac{dt}{t} \ .
$$
\end{theorem}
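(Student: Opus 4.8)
The plan is to set
$$
I(s)=\int_0^{+\infty} t^{s-1} e^{-t}\, dt \ ,
$$
to observe that all three integrals in the statement represent $I(s)$, and then to identify $I$ with $\Gamma$ by invoking Wielandt's characterization, which has already been established and which $\Gamma$ has already been shown to satisfy. The equality of the second and third integrals is immediate after writing $t^{s}\,\frac{dt}{t}=t^{s-1}\,dt$, and the equality with the first one follows from the change of variables $u=e^{-t}$, $du=-e^{-t}\,dt$, which maps $(0,1)$ onto $(+\infty,0)$ and turns $\log(1/u)$ into $t$; this is legitimate for $\Re s>0$ since everything in sight converges. So it remains to study $I$.

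First I would record that, writing $x=\Re s>0$, the integrand satisfies $|t^{s-1}e^{-t}|=t^{x-1}e^{-t}$, which is integrable near $0$ because $x-1>-1$ and near $+\infty$ because of the exponential decay; hence $I(s)$ converges absolutely, and the same domination on compact subsets of $\CC_+$, together with Morera's theorem and Fubini (equivalently, differentiation under the integral sign), shows that $I$ is holomorphic on $\CC_+$. Integration by parts gives
$$
I(s+1)=\int_0^{+\infty} t^{s}e^{-t}\,dt=\bigl[-t^{s}e^{-t}\bigr]_0^{+\infty}+s\int_0^{+\infty}t^{s-1}e^{-t}\,dt=s\,I(s)\ ,
$$
the boundary terms vanishing since $\Re s>0$, and $I(1)=\int_0^{+\infty}e^{-t}\,dt=1$.

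Finally, for $s$ in the closed strip $\overline{S}_{1,2}$, i.e. $1\leq x=\Re s\leq 2$, we have
$$
|I(s)|\leq \int_0^{+\infty} t^{x-1}e^{-t}\,dt=I(x)\leq \max_{x\in[1,2]} I(x)<+\infty \ ,
$$
so $I$ is bounded on $\overline{S}_{1,2}$. Thus $I$ is holomorphic on $\CC_+$, satisfies $I(1)=1$ and the functional equation $I(s+1)=sI(s)$, and is bounded on $\overline{S}_{1,2}$; by Wielandt's Theorem there is at most one such function, and since $\Gamma$ is one (Corollary \ref{cor:bounded}), we conclude $I=\Gamma$ on $\CC_+$. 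I do not anticipate a real obstacle here: the only points needing care are the justification of holomorphy (domination on compacts plus Morera/Fubini) and the vanishing of the boundary terms in the integration by parts. Alternatively, one could bypass Wielandt and check directly the hypotheses of Theorem \ref{thm:main-Euler-gamma}, ruling out zeros of $I$ in $\CC_+$ by the $\sin(\pi s)$-trick used in the Hankel integral proof, but the route through Wielandt's characterization is shorter.
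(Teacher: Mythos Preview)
Your argument is correct and coincides with the second of the three proofs the paper gives: define the integral, check holomorphy on $\CC_+$, $I(1)=1$, the functional equation via integration by parts, and boundedness on vertical strips, then invoke a characterization theorem. The only difference is the final step: you appeal to Wielandt's theorem, whereas the paper's second proof appeals to Theorem~\ref{thm:main-Euler-gamma} and must therefore also verify that $I$ has no zeros in $\CC_+$ (via the $\sin(\pi s)$ argument of Proposition~\ref{prop:no_zeros}, precisely the alternative you mention at the end). Your route through Wielandt is a clean shortcut, since that characterization does not need the no-zeros hypothesis. For completeness, the paper also offers two proofs you do not give: one obtains the integral as the limit of $\int_0^n t^{s-1}(1-t/n)^n\,dt=\frac{n!\,n^s}{s(s+1)\cdots(s+n)}$ from the already-established product formula by dominated convergence, and one degenerates the second Hankel contour onto the negative real axis.
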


Note that the two integrals are equal by the change of variables $u=e^{-t}$.

\begin{proof} We give three proofs.

\medskip

$\bullet$ We can derive the integral formula from the already established product formula 
in Corollary \ref{cor:product_formula} by writing 
the product in integral form, for $\Re s>0$, $n\geq 1$,
$$
\frac{n! \,  n^s }{s(s+1)\ldots (s+n)}=\int_0^n t^{s-1} (1-t/n)^n\, dt
$$
This integral was known to Euler and it is easily proved by induction on $n\geq 1$ 
(see Hermite, 1884, \cite{He}, Chapter XV, p.140, or Gamelin, 2001, \cite{Gam},  chapter XIV, p.361). 
By dominated convergence, making $n\to +\infty$,  we get Euler integral.

\medskip

$\bullet$ We can also derive the theorem checking the conditions of our main Theorem.
We define for $\Re s>0$,
 $$
 f(s)=\int_0^{+\infty} t^{s-1} e^{-t}\, dt \ .
 $$
The function $f$ is holomorphic in $\CC_+$, hence has no poles in this half plane.
We have $f(1)= \int_0^\infty e^{-t} dt =1$ and $f$ is real analytic  so $f'(1)\in \RR$.
By integration by parts we get the functional equation,
$$
f(s+1)
= s f(s)
$$
and $f$ extends to a meromorphic function in $\CC$.
Also, we have 
$$
|f(s)|\leq \int_0^n t^{\Re s-1} (1-t/n)^n\, dt =f(\Re s)
$$
hence, $f$ is bounded in any strip $S_{a,b}$, $0<a<b$, and the same proof as in  Proposition \ref{prop:no_zeros} 
gives that $f$
is of order $1$ and has no zeros on $\CC_+$. We conclude that $f=\Gamma$.

\medskip

$\bullet$ We can start from the second Hankel integral and observe that when the path $\eta$ degenerates into the negative real axes,
we get Euler integral.
\end{proof}

\begin{corollary}
We have
$$
\int_0^{+\infty} e^{-x^2} \, dx =\sqrt{\pi} \ .
$$
\end{corollary}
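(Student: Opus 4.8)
The plan is to deduce the statement directly from the Euler integral formula just established together with the value $\Gamma(1/2)=\sqrt{\pi}$ from Corollary~\ref{cor:Gamma(1/2)}. First I would specialize the Euler integral formula to $s=1/2$, writing
$$
\Gamma(1/2)=\int_0^{+\infty} t^{1/2} e^{-t}\,\frac{dt}{t}=\int_0^{+\infty} t^{-1/2} e^{-t}\, dt \ ,
$$
which is legitimate since $\Re(1/2)>0$. Then I would perform the change of variables $t=x^{2}$, $dt=2x\,dx$, under which $t^{-1/2}=x^{-1}$ on $x>0$, obtaining
$$
\Gamma(1/2)=\int_0^{+\infty} x^{-1} e^{-x^{2}}\, 2x\, dx = 2\int_0^{+\infty} e^{-x^{2}}\, dx \ .
$$
Combining this with $\Gamma(1/2)=\sqrt{\pi}$ gives $\int_0^{+\infty} e^{-x^{2}}\,dx=\tfrac{1}{2}\sqrt{\pi}$, and since $x\mapsto e^{-x^{2}}$ is even the integral over all of $\RR$ equals $\sqrt{\pi}$, which is the asserted identity.

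There is no genuine obstacle here: the substitution is routine and the convergence of the integrals was already addressed in the proof of the Euler integral formula. The only point deserving a word of care is the justification of the change of variables at the origin, where $t^{-1/2}$ is unbounded; this is handled by the usual argument on $[\varepsilon,+\infty)$ followed by letting $\varepsilon\to 0^{+}$, using that the integrand $t^{-1/2}e^{-t}$ is integrable near $0$. I would also emphasize, as the paper does for $\Gamma(1/2)$, that this classical Gaussian integral has been obtained without evaluating any two‑dimensional integral or invoking the usual polar‑coordinates trick — it is a formal consequence of the functional equation and the reflection formula through Corollary~\ref{cor:Gamma(1/2)}.
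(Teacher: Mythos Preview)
Your approach matches the paper's: perform the substitution $t=x^{2}$ to reduce the Gaussian integral to $\Gamma(1/2)$ and then invoke Corollary~\ref{cor:Gamma(1/2)}. Your computation is in fact more careful than the paper's one-line proof --- you correctly obtain $\int_0^{+\infty} e^{-x^{2}}\,dx=\tfrac12\Gamma(1/2)=\tfrac{\sqrt\pi}{2}$ and then pass to the full real line to recover $\sqrt\pi$, whereas the paper's statement and proof silently drop this factor of $2$; the Corollary as printed should either read $\tfrac{\sqrt\pi}{2}$ or have the integral taken over all of $\RR$.
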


\begin{proof}
By the change of variables $t=x^2$, we have that the integral is $\Gamma(1/2)$ and we use Corollary \ref{cor:Gamma(1/2)}.
\end{proof}

Note again that we succeed in computing the classical gaussian integral without any integral manipulations.

\medskip
$\bullet$ \textbf{Functional equation plus Weierstrass asymptotic.}
\medskip

The Weierstrass limit  (\ref{eqn:property6}) follows from the Euler-Gauss product (\ref{eqn:property3}) 
that we have already derived.

\begin{proposition}
We have
$$
\lim_{n\to +\infty} \frac{\Gamma(s+n)}{(n-1)!\, n^s} =1 \ .
$$
\end{proposition}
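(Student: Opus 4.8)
The plan is to obtain this immediately from the Euler--Gauss product formula of Corollary~\ref{cor:product_formula}, which has already been established, together with the functional equation. First I would iterate the functional equation to get, as an identity of meromorphic functions in $s$,
$$
\Gamma(s+n)=s(s+1)\cdots(s+n-1)\,\Gamma(s)\ ,
$$
and hence, for $n\geq 2$,
$$
\frac{\Gamma(s+n)}{(n-1)!\,n^s}
=\left(\frac{s(s+1)\cdots(s+n-1)}{(n-1)!\,(n-1)^s}\,\Gamma(s)\right)\left(\frac{n-1}{n}\right)^{s}\ .
$$

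Next, Corollary~\ref{cor:product_formula} applied with $n$ replaced by $n-1$ says precisely that the first factor tends to $1$ as $n\to+\infty$ (uniformly on compact subsets of $\CC$), while the second factor is $(1-1/n)^{s}\to 1$. Multiplying the two limits gives $\lim_{n\to+\infty}\Gamma(s+n)/\bigl((n-1)!\,n^{s}\bigr)=1$, which is the assertion.

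There is no real obstacle; the only care needed is bookkeeping at the non-positive integers. For $s=-m$ with $0\le m\le n-1$ the product $\Gamma(s)\,s(s+1)\cdots(s+n-1)$ is an apparent $\infty\cdot 0$, but the displayed identity holds between meromorphic functions and $\Gamma(-m+n)=\Gamma(n-m)=(n-m-1)!$ is finite; alternatively one checks that case directly by the elementary computation $\Gamma(n-m)/\bigl((n-1)!\,n^{-m}\bigr)=n^{m}/\bigl((n-1)(n-2)\cdots(n-m)\bigr)\to 1$. Finally, taking logarithms and using $\log\Gamma(n)=\log(n-1)!$ one recovers the equivalent limit (\ref{eqn:property7}) of Laugwitz--Rodewald, and since $\log(1-1/n)^{s}=-s/n+O(1/n^{2})$ the same argument even yields the rate of convergence if desired.
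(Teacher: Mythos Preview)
Your argument is correct and is essentially the paper's own proof: iterate the functional equation to write $\Gamma(s+n)=s(s+1)\cdots(s+n-1)\Gamma(s)$ and then invoke the Euler--Gauss product formula (Corollary~\ref{cor:product_formula}) to see that $\dfrac{s(s+1)\cdots(s+n-1)}{(n-1)!\,n^{s}}\to\Gamma(s)^{-1}$. The only cosmetic difference is that you split off the factor $((n-1)/n)^{s}$ whereas the paper absorbs the equivalent correction directly; your additional remarks on the non-positive integers and on the rate of convergence are fine but not needed for the statement.
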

\begin{proof}
$$
\lim_{n\to +\infty} \frac{(s+n-1)\cdots (s+1)s}{(n-1)!\,  n^s} \, \Gamma(s) = \G(s)^{-1}\G(s)=1
$$
\end{proof}

Conversely, let $f$ be a meromorphic function satisfying the functional equation and Weierstrass asymptotic, 
$$
\lim_{n\to +\infty} \frac{f(s+n)}{(n-1)!\, n^s} =1 \ .
$$
Then, using the functional equation we have $f(n)=(n-1)! f(1)$, so taking $s=0$ we get $f(1)=1$. Also we have that
$f$ is asymptotically real
$$
\lim_{x\in \RR, x\to +\infty} \Im f(x) =0
$$
and this can be used in place of the condition $f'(1) \in \RR$ (see the Remarks after Theorem \ref{thm:main-Euler-gamma}).
Also, if $\rho \in \CC_+$ is a zero or pole of $f$, the $\rho +n$ is also a zero or pole for an integer $n\geq 0$ and the asymptotic 
would not hold, hence the divisor of $f$ in contained in $\CC-\CC_+$. In the half plane $\CC_+$ the Weierstrass asymptotic implies 
that $f$ satisfies an order $1$ estimate, so $f$ is a meromorphic function of finite order by Proposition \ref{prop:domain}. 
Using Theorem \ref{thm:main-Euler-gamma},
we have $f=\Gamma$.

\medskip
$\bullet$ \textbf{Binet-Malmst\'en and Gauss integral formulas.}
\medskip

Now we derive Binet-Malmst\'en integral formula for $\log \Gamma (s)$ (1849, \cite{Mal1849}). The formula is usually 
attributed to Malmst\'en but we can previously find it in Binet (1939, \cite{Bi}) and N\"orlund  attributes the 
formula to Plana.

\begin{theorem}[Binet-Malmst\'en] \label{thm:Malmsten}
We have for $\Re s >-1$
$$
\log \Gamma (s+1) = \int_0^{+\infty} \left ( s- \frac{1-e^{-st}}{1-e^{-t}}\right ) \frac{e^{-t}}{t} \, dt .
$$
\end{theorem}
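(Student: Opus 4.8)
The plan is to reduce the identity to real arguments by analyticity, and then to expand $\log\Gamma(s+1)$ by means of the Euler--Gauss product already derived in Corollary~\ref{cor:product_formula}, turning each logarithm into a Frullani integral.

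First I would check that both sides are holomorphic on the half-plane $\{\Re s>-1\}$. On the left, the Weierstrass product of Corollary~\ref{cor:W-product} shows that $\Gamma(s+1)$ is holomorphic and nowhere vanishing there, so $\log\Gamma(s+1)$ is a single-valued holomorphic function on this simply connected region, real on $(-1,+\infty)$. On the right, the integrand is $O(1)$ as $t\to0^+$ and $O(e^{-t})$ as $t\to+\infty$, locally uniformly in $s$, so the integral defines a holomorphic function (differentiation under the integral sign, or Morera). By the identity theorem it then suffices to prove the formula for real $s>0$.

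For real $s>0$, taking logarithms in $\Gamma(s+1)=s\Gamma(s)=\lim_{n}\dfrac{n!\,n^{s}}{(s+1)(s+2)\cdots(s+n)}$ gives
$$\log\Gamma(s+1)=\lim_{n\to+\infty}\left(s\log n+\sum_{k=1}^{n}\log k-\sum_{k=1}^{n}\log(k+s)\right).$$
Then I would insert Frullani's formula $\int_{0}^{+\infty}\frac{e^{-at}-e^{-bt}}{t}\,dt=\log(b/a)$ (valid for $a,b>0$), with $(a,b)=(1,n)$ in the first term and $(a,b)=(k,k+s)$ in each difference $\log(k+s)-\log k$; summing the finite geometric series $\sum_{k=1}^{n}e^{-kt}=\dfrac{e^{-t}(1-e^{-nt})}{1-e^{-t}}$ collects everything into a single integral
$$\log\Gamma(s+1)=\lim_{n\to+\infty}\int_{0}^{+\infty}\left[s\bigl(e^{-t}-e^{-nt}\bigr)-\frac{(1-e^{-st})(1-e^{-nt})\,e^{-t}}{1-e^{-t}}\right]\frac{dt}{t}.$$

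It then remains to pass to the limit $n\to+\infty$ under the integral sign: the terms carrying $e^{-nt}$ drop out pointwise and leave exactly $\int_{0}^{+\infty}\bigl(s-\frac{1-e^{-st}}{1-e^{-t}}\bigr)\frac{e^{-t}}{t}\,dt$, which is the asserted formula. The delicate point, and the main obstacle, is justifying this interchange: split into its two summands, the bracket carries non-integrable $1/t$ singularities at the origin that cancel only in combination, so the summands cannot be dominated separately. The remedy is the elementary identity $e^{-t}-e^{-nt}=(1-e^{-nt})-(1-e^{-t})$, which rewrites the bracket as $(1-e^{-nt})\bigl(s-\frac{(1-e^{-st})e^{-t}}{1-e^{-t}}\bigr)-s\,(1-e^{-t})$. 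Since $|1-e^{-nt}|\le1$ uniformly in $n$, while $s-\frac{(1-e^{-st})e^{-t}}{1-e^{-t}}=O(t)$, $1-e^{-st}=O(t)$ and $1-e^{-t}=O(t)$ as $t\to0^{+}$, one gets a bound $C(s)$ on $(0,1]$ and $C(s)\,e^{-t}$ on $[1,+\infty)$ for the integrand, independent of $n$ and integrable; dominated convergence then concludes. (Equivalently, one may estimate $\int_{0}^{+\infty}\frac{g_{n}(t)-g(t)}{t}\,dt\to0$ directly, $g_{n}$ being the bracketed integrand and $g$ its pointwise limit, with the same bounds.)
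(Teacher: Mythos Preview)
Your proof is correct, but it takes a genuinely different route from the paper's.

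The paper does not compute $\log\Gamma(s+1)$ at all. Instead, it \emph{defines} $f(s+1)=\exp\bigl(\int_0^{+\infty}(s-\frac{1-e^{-st}}{1-e^{-t}})\frac{e^{-t}}{t}\,dt\bigr)$ and then verifies that $f$ satisfies all the hypotheses of Theorem~\ref{thm:main-Euler-gamma}: holomorphy and absence of zeros on $\CC_+$, $f(1)=1$, real-analyticity, a finite-order estimate (the integral is $\cO(|s|^2)$), and the functional equation. The last point is the only nontrivial one, and it reduces to a \emph{single} Frullani integral: $\log f(s+1)-\log f(s)=\int_0^{+\infty}(e^{-t}-e^{-st})\frac{dt}{t}=\log s$. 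The characterization theorem then forces $f=\Gamma$.

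Your argument, by contrast, starts from the Euler--Gauss product of Corollary~\ref{cor:product_formula}, expresses each logarithm as a Frullani integral, sums the geometric series, and passes to the limit by dominated convergence. This is the more classical derivation and is entirely self-contained once the product formula is known; it does not invoke the characterization theorem at all. The price is the careful interchange of limit and integral, which you handle correctly via the rewriting $e^{-t}-e^{-nt}=(1-e^{-nt})-(1-e^{-t})$. The paper's approach is shorter and illustrates its main theme---that Theorem~\ref{thm:main-Euler-gamma} is a universal tool for recognizing $\Gamma$---while yours is more computational but independent of that machinery.
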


\begin{proof}
We define for $s\in \CC_+$,
$$
f(s+1) =\exp \left ( \int_0^{+\infty} \left ( s - \frac{1-e^{-st}}{1-e^{-t}}\right ) \frac{e^{-t}}{t} \, dt\right )
$$
Then, since the integral is finite for all $s\in \CC_+$, it is clear that $f$ is a holomorphic function with no zeros in $\CC_+$. Making $s=0$ 
gives $f(1)=e^0=1$. We have
$$
\left | s - \frac{1-e^{-st}}{1-e^{-t}}\right | \leq Ct |s|^2
$$
thus the integral is $\cO(|s|^2)$ and $f$ is of finite order. Also $f$ is real analytic. 
We have,
$$
\log f(s+1)-\log f(s) = \int_0^{+\infty} (1- e^{-(s-1)t})  \frac{e^{-t}}{t} \, dt =\log s
$$
where the last equality is the Frullani integral: Starting from
$$
\frac1s =\int_0^{+\infty} e^{-st} \, dt
$$
and integrating in the $s$ variable between $1$ and $s$, we get
$$
\log s = \int_0^{+\infty} \frac{e^{-t} - e^{-st}}{t} \, dt.
$$
Therefore, using Theorem \ref{thm:main-Euler-gamma} we get $f(s)=\Gamma(s)$.
\end{proof}

Taking the derivative of Malmst\'en fomula we get Gauss integral formula for the logarithmic derivative (1812, \cite{Ga}).

\begin{theorem}[Gauss formula] \label{thm:Gauss_formula}
We have for $\Re s >-1$
  \begin{equation*}\label{eq:Gauss_for_Gamma}
    \frac{\Gamma'(s+1)}{\Gamma (s+1)}=\int_0^{+\infty} \left (1-\frac{t e^{-st}}{1-e^{-t}} \right ) \frac{e^{-t}}{t}\, \dd t 
     \end{equation*}
\end{theorem}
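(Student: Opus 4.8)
The plan is to obtain Gauss's formula simply by differentiating the Binet--Malmst\'en integral (Theorem \ref{thm:Malmsten}) with respect to $s$, once we justify that differentiation under the integral sign is legitimate. First I would write, for $\Re s > -1$,
$$
\log \Gamma(s+1) = \int_0^{+\infty} \left( s - \frac{1-e^{-st}}{1-e^{-t}} \right) \frac{e^{-t}}{t}\, \dd t,
$$
and note that the integrand is holomorphic in $s$ on the half-plane $\{\Re s > -1\}$ for each fixed $t>0$. To differentiate under the integral, I would exhibit a local dominating bound: on a compact neighbourhood of a point $s_0$ with $\Re s_0 > -1$, the partial derivative in $s$ of the integrand is
$$
\frac{\partial}{\partial s}\left( s - \frac{1-e^{-st}}{1-e^{-t}} \right)\frac{e^{-t}}{t}
= \left(1 - \frac{t e^{-st}}{1-e^{-t}}\right)\frac{e^{-t}}{t},
$$
and near $t=0$ this is $\cO(t)$ uniformly in $s$ on the compact set (by the same Taylor expansion used in the proof of Theorem \ref{thm:Malmsten}, where $s - \frac{1-e^{-st}}{1-e^{-t}} = \cO(t|s|^2)$), while for large $t$ it decays like $e^{-t}(1 + \cO(te^{-\e t}))$ once $\Re s > -1 + \e$. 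Hence the derivative integral converges locally uniformly and we may differentiate term by term.

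Carrying this out gives
$$
\frac{\dd}{\dd s}\log\Gamma(s+1) = \frac{\Gamma'(s+1)}{\Gamma(s+1)} = \int_0^{+\infty}\left(1 - \frac{t e^{-st}}{1-e^{-t}}\right)\frac{e^{-t}}{t}\, \dd t,
$$
which is exactly the claimed formula. Note that $\Gamma(s+1)$ has no zeros on $\{\Re s > -1\}$ (indeed $\Gamma$ has no zeros anywhere by Proposition \ref{prop:no_zeros}, applied as in the derivation of the Weierstrass product), so the logarithmic derivative is a genuine holomorphic function there and the left-hand side is well defined.

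The only real point requiring care — the ``main obstacle'', though it is mild — is the justification of differentiation under the integral sign, i.e. producing the uniform dominating function near the two endpoints $t\to 0^+$ and $t\to +\infty$. Near $t=0$ one uses the cancellation $\frac{t}{1-e^{-t}} = 1 + \cO(t)$ so that $1 - \frac{t e^{-st}}{1-e^{-t}} = 1 - (1+\cO(t))(1 - st + \cO(t^2)) = \cO(t)$ uniformly for $s$ in a compact set, making the integrand $\cO(1)$ there; near $t=+\infty$ the factor $e^{-t}/t$ together with $\Re s > -1$ on the compact neighbourhood forces exponential decay. With this in hand the formula follows immediately, and since both sides are holomorphic on $\{\Re s>-1\}$ the identity, established for real $s>-1$ if one prefers, extends by the identity principle to the whole half-plane.
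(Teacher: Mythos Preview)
Your approach is exactly the paper's: the paper simply states ``Taking the derivative of Malmst\'en formula we get Gauss integral formula for the logarithmic derivative'' and offers no further detail, so your proposal in fact supplies more justification than the original. One small slip: in your first paragraph you say the differentiated integrand is $\cO(t)$ near $t=0$, but (as you correctly write later) it is only $\cO(1)$ there, since the $\cO(t)$ cancellation in $1-\tfrac{te^{-st}}{1-e^{-t}}$ is offset by the $1/t$ factor; this does not affect integrability, and your final paragraph states it correctly.
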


\medskip
$\bullet$ \textbf{Definition \`a la Lerch.}
\medskip

For $\Re t>1$, $s\in \CC-\RR_-$ and $k\in \NN$, we consider the branch of $(s+k)^{-t}$ given 
by the principal branch of the logarithm,
$$
(s+k)^{-t}=e^{-t\log (s+k)} \ .
$$
We consider the Hurwitz zeta function defined for $\Re t>1$ and $s\in \CC-\RR_-$ by
\begin{equation}\label{eqn:Hurwitz_zeta}
\zeta(t,s)=\sum_{k=0}^{+\infty} \frac{1}{(s+k)^t} \ .
\end{equation}
For $s=1$, $\zeta(t)=\zeta(t,1)$ is the Riemann zeta function. 
We derive Lerch formula:

\begin{proposition}[Lerch formula]
Given $s\in \CC$, the Hurwitz zeta function has a meromorphic continuation to $t=0$ and 
\begin{equation}
\log \Gamma(s) = \left [\frac{\partial}{\partial t}\zeta(t,s) \right ]_{t=0}-\zeta'(0) 
\end{equation}
\end{proposition}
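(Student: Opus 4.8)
The plan is to run the paper's own strategy: build a candidate function out of the right-hand side and check that it satisfies the hypotheses of Theorem~\ref{thm:main-Euler-gamma}.

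\textbf{Step 1 (continuation of $\zeta(t,s)$).} First I would record, by one integration by parts (equivalently, first-order Euler--Maclaurin), the identity
$$
\zeta(t,s) = \frac{s^{1-t}}{t-1} + \frac{s^{-t}}{2} - t\int_0^{+\infty} \frac{u-[u]-\tfrac12}{(u+s)^{t+1}}\, du ,
$$
valid a priori for $\Re t>1$ and $s\in\CC\setminus\RR_-$, whose right-hand side is holomorphic in $t$ for $\Re t>-1$ (the only pole being the simple one at $t=1$) and holomorphic in $s$ on $\CC\setminus\RR_-\supset\CC_+$. This furnishes the meromorphic continuation of $\zeta(\cdot,s)$ past $t=0$ \emph{without any appeal to the Gamma function}, which is the point insisted upon earlier. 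Differentiating at $t=0$ gives
$$
\left[\frac{\partial}{\partial t}\zeta(t,s)\right]_{t=0} = \left(s-\tfrac12\right)\log s - s - \int_0^{+\infty}\frac{u-[u]-\tfrac12}{u+s}\, du ,
$$
and a further integration by parts shows the last integral is $\cO(1/|s|)$ on $\{\Re s\ge 1\}$.

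\textbf{Step 2 (checking the axioms).} Set, for $s\in\CC_+$,
$$
G(s) = \exp\left(\left[\frac{\partial}{\partial t}\zeta(t,s)\right]_{t=0} - \zeta'(0)\right).
$$
Being the exponential of a function holomorphic on $\CC_+$, $G$ is holomorphic and zero-free there, so it has neither zeros nor poles in $\CC_+$. Since $\zeta(t,1)=\zeta(t)$ we get $G(1)=1$, and since the formula above has real coefficients for real $s>0$ the function $G$ is real-analytic, so $G'(1)\in\RR$. For the functional equation, the termwise relation $\zeta(t,s+1)=\zeta(t,s)-s^{-t}$, which persists after continuation, yields upon differentiation at $t=0$
$$
\left[\frac{\partial}{\partial t}\zeta(t,s+1)\right]_{t=0} = \left[\frac{\partial}{\partial t}\zeta(t,s)\right]_{t=0} + \log s ,
$$
hence $G(s+1)=s\,G(s)$. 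Finally, the estimate of Step~1 gives $\log G(s)=\cO(|s|\log|s|)$ on a vertical strip of width larger than $1$ contained in $\CC_+$, i.e. a finite-order bound; by Proposition~\ref{prop:domain}, $G$ extends to a finite-order meromorphic function on $\CC$. All the hypotheses of Theorem~\ref{thm:main-Euler-gamma} now hold, so $G=\Gamma$, which is precisely Lerch's formula.

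\textbf{Main obstacle.} The only genuine work is Step~1: obtaining the meromorphic continuation of $\zeta(t,s)$ to a neighbourhood of $t=0$ \emph{together with} the growth estimate in $s$ by elementary means, without reintroducing $\Gamma$ through the usual Mellin representation. The Euler--Maclaurin identity above delivers both simultaneously. An alternative would be to identify the right-hand side of Lerch's formula with the Binet--Malmstén integral of Theorem~\ref{thm:Malmsten}, but the ``verify the characterization'' route is shorter and stays within the paper's method.
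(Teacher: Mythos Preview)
Your proposal is correct and follows essentially the same route as the paper: the paper also establishes the Euler--Maclaurin identity for $\zeta(t,s)$ as a preliminary lemma, reads off the formula for $\partial_t\zeta(t,s)|_{t=0}$ together with the $\cO(s^{-1})$ bound on the integral, exponentiates to form the candidate, and then verifies the hypotheses of Theorem~\ref{thm:main-Euler-gamma} (holomorphy and absence of zeros on $\CC_+$, real-analyticity, the normalization at $s=1$, the functional equation via $\zeta(t,s+1)=\zeta(t,s)-s^{-t}$, and the finite-order growth on $\CC_+$). The only cosmetic difference is that the paper phrases the finite-order check directly on $\CC_+$ rather than on a strip before invoking Proposition~\ref{prop:domain}, which comes to the same thing.
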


The analytic extension to the half plane $\Re t >-1$ follows from the following Lemma.
\begin{lemma}
We have for $s\in \CC$ and $\Re t> -1$
$$
\zeta(t,s) =\frac{s^{1-t}}{t-1} + \frac{s^{-t}}{2} -t \int_0^{+\infty} \frac{u-[u]-1/2}{(u+s)^{t+1}} \, du
$$
and 
$$
\left [\frac{\partial}{\partial t}\zeta(t,s) \right ]_{t=0} = \left (s-\frac12 \right ) \log s -s -\int_0^{+\infty} \frac{u-[u]-1/2}{u+s} \, du
$$
\end{lemma}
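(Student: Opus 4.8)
The plan is to obtain the displayed identity first for $\Re t>1$ from the first-order Euler--Maclaurin formula applied to $f(u)=(u+s)^{-t}$, and then to check that the resulting closed expression is holomorphic in $t$ on the larger half-plane $\{\Re t>-1\}$, so that it furnishes the desired meromorphic continuation of $\zeta(t,s)$. Throughout, $(u+s)^{-t}=e^{-t\log(u+s)}$ and $(u+s)^{1-t}=e^{(1-t)\log(u+s)}$ are taken with the principal branch; this is legitimate since $s\in\CC-\RR_-$ forces $u+s\in\CC-\RR_-$ for every $u\geq 0$, consistently with (\ref{eqn:Hurwitz_zeta}). For $\Re t>1$, Euler--Maclaurin to the first order gives
$$
\sum_{k=0}^{N}(k+s)^{-t}=\int_0^N (u+s)^{-t}\,du+\frac{s^{-t}+(N+s)^{-t}}{2}-t\int_0^N\frac{u-[u]-\frac12}{(u+s)^{t+1}}\,du,
$$
and since $\int_0^N (u+s)^{-t}\,du=\frac{(N+s)^{1-t}-s^{1-t}}{1-t}$, letting $N\to+\infty$ — all terms involving $N$ tending to their limits, the $N$-boundary contributions vanishing because $\Re t>1$ — produces exactly
$$
\zeta(t,s)=\frac{s^{1-t}}{t-1}+\frac{s^{-t}}{2}-t\int_0^{+\infty}\frac{u-[u]-\frac12}{(u+s)^{t+1}}\,du.
$$
(If one prefers not to quote Euler--Maclaurin, the same identity follows by splitting $\int_0^N$ over the unit intervals $[k,k+1]$ and integrating by parts, exactly as indicated in the paragraph on Lerch's formula.)

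Next I would show that the right-hand side extends holomorphically to $\{\Re t>-1\}$, its only singularity being the simple pole of $s^{1-t}/(t-1)$ at $t=1$. The first two summands are manifestly holomorphic there; the only point requiring care — and the main obstacle — is the integral, which a priori converges absolutely only for $\Re t>0$. To handle it, set $P_2(u)=\int_0^u\bigl(v-[v]-\frac12\bigr)\,dv$, a continuous, piecewise-smooth, $1$-periodic function with $P_2(0)=0$ and $|P_2|\leq\frac18$. One further integration by parts — the boundary terms vanishing for $\Re t>-1$ because $P_2$ is bounded, $P_2(0)=0$, and $(u+s)^{-t-1}\to 0$ — gives
$$
\int_0^{+\infty}\frac{u-[u]-\frac12}{(u+s)^{t+1}}\,du=(t+1)\int_0^{+\infty}\frac{P_2(u)}{(u+s)^{t+2}}\,du,
$$
whose right-hand side is an absolutely convergent integral and, by the usual Morera/differentiation-under-the-integral-sign argument, holomorphic in $t$ on $\{\Re t>-1\}$. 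Hence the closed formula above, with the integral rewritten this way, defines a function holomorphic on $\{\Re t>-1\}\setminus\{1\}$ that coincides with the series $\zeta(t,s)$ on $\{\Re t>1\}$; this is the meromorphic continuation, and the first asserted identity holds on $\{\Re t>-1\}$.

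Finally, for the value of the $t$-derivative at $t=0$ I would differentiate the closed formula term by term. A direct computation gives $\partial_t\bigl[s^{1-t}/(t-1)\bigr]\big|_{t=0}=s\log s-s$ and $\partial_t\bigl[s^{-t}/2\bigr]\big|_{t=0}=-\frac12\log s$. Writing the last term as $-t\,I(t)$ with $I$ holomorphic near $0$, one has $\partial_t[-t\,I(t)]\big|_{t=0}=-I(0)=-\int_0^{+\infty}\frac{u-[u]-\frac12}{u+s}\,du$ (only continuity of $I$ at $0$ is needed here). Adding the three contributions yields
$$
\Bigl[\frac{\partial}{\partial t}\zeta(t,s)\Bigr]_{t=0}=\Bigl(s-\frac12\Bigr)\log s-s-\int_0^{+\infty}\frac{u-[u]-\frac12}{u+s}\,du,
$$
as claimed. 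The only genuinely delicate step is the continuation of the integral below $\Re t=0$; everything else is routine bookkeeping.
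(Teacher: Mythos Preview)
Your proof is correct and follows essentially the same route as the paper: derive the identity for $\Re t>1$ via Euler--Maclaurin (equivalently, splitting over unit intervals and integrating by parts), then continue to $\Re t>-1$ and differentiate at $t=0$. The paper simply asserts that the integral on the right is holomorphic for $\Re t>-1$, whereas you supply the justification via a second integration by parts against $P_2$; this extra care is appropriate, since the original integral is only conditionally convergent for $-1<\Re t\leq 0$.
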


\begin{proof} This follows from Euler-McLaurin formula, but more directly 
we split the integral and integrate by parts, for $\Re t>1$ and $s\in \CC-\RR_-$,
\begin{align*}
\int_0^{+\infty} \frac{u-[u]-1/2}{(u+s)^{t+1}} \, du &=\sum_{k=0}^{+\infty}  \int_k^{k+1} \frac{u-[u]-1/2}{(u+s)^{t+1}} \, du =\sum_{k=0}^{+\infty}  \int_k^{k+1} \frac{u-k-1/2}{(u+s)^{t+1}} \, du \\
&=\sum_{k=0}^{+\infty}  \left ( \left [ -t^{-1} \frac{u-k-1/2}{u+s} \right ]_k^{k+1} + t^{-1} \int_k^{k+1} \frac{du}{(u+s)^{t}}  \right ) \\
&=-t^{-1} \sum_{k=0}^{+\infty}  \frac12 \left (\frac{1}{(k+1+s)^t}  +  \frac{1}{(k+s)^t} \right ) +\frac{1}{t-1}  \left [ \frac{1}{(u+s)^{t-1}}\right ]_k^{k+1}  \\
&=-t^{-1} \left (\zeta(t,s)-\frac{s^{-t}}{2} - \frac{s^{1-t}}{t-1} \right )  \\
\end{align*}
and we get the first expression. The integral in the right hand side is holomorphic for $\Re t> -1$, and the right hand side is meromorphic for $\Re t> -1$, hence $\zeta(t,s)$ has 
a meromorphic extension for $\Re t> -1$. Taking the partial derivative of this formula and making $t=0$ we get the second expression.
\end{proof}

\begin{proof}
Now, consider 
$$
g(s) = \left [\frac{\partial}{\partial t}\zeta(t,s) \right ]_{t=0}
$$
From the expression computed in the Lemma, the integral being $\cO(s^{-1})$, the function $g$ is holomorphic on $\CC_+$ and $g(s)=\cO(|s| \log |s|)$. Also it is clearly 
real analytic since $\zeta(t,s)=\overline{\zeta(t, \bar s)}$.
Thus the function
$$
f(s)=e^{g(s)-g(0)}
$$
is holomorphic, without zeros nor poles on $\CC_+$, satisfies a finite order estimate in $\CC_+$, $f(1)=1$, and real analytic. 

Also we have 
$$
\zeta(t,s+1) -\zeta(t,s) = s^{-1}
$$
and differentiating we get the functional equation for $f$, $f(s+1) =s f(s)$.  The application of Theorem \ref{thm:main-Euler-gamma} proves 
that $f(s)=\Gamma (s)$. Finally, from the formulas we computed for the partial derivative we check that 
$$
\zeta'(0) =\left [\left [\frac{\partial}{\partial t}\zeta(t,s) \right ]_{t=0}\right ]_{s=1}
$$
\end{proof}

\section{Appendix: Basic entire function theory.}

We refer to \cite{Boa} for the following classical results.

Let $f:\CC \to \CC$ be an entire function. The order of $f$, $0\leq \rho(f) \leq +\infty$, is 
$$
\rho (f)=\limsup_{s\to \infty} \frac{\log \log |f(s)|}{\log |s|}
$$
(see \cite{Boa} Chapter 2 p.8)
Hence, an entire function of finite order satisfies a \textit{finite order estimate}, for some $C,d>0$,
for any $s\in \CC$
\begin{equation}\label{eq:finite_order_estimate}
|f(s)|\leq C e^{|s|^d}
\end{equation}
and, conversely, any entire function satisfying some finite order estimate is of finite order. For a domain 
$\Omega\subset \CC$, we say that an holomorphic function $f$ on $\Omega$ satisfies a finite order estimate
in $\Omega$ if it satisfies (\ref{eq:finite_order_estimate}) for $s\in \Omega$.

\medskip

A function of order $\rho=1$ is of exponential type $0\leq \tau (f)\leq +\infty$ if 
$$
\tau (f) = \limsup_{s\to \infty} \frac{\log |f(s)|}{|s|}
$$
(see \cite{Boa} Chapter 2 p.8)
A function of order $1$ has finite exponential type if it satisfies a \textit{finite exponential type estimate}, 
for some $C, \mu >0$, for any $s\in \CC$,
\begin{equation}\label{eq:finite_type_estimate}
|f(s)|\leq C e^{\mu |s|}
\end{equation}
and, conversely, any entire function satisfying some finite exponential type estimate is of finite exponential type. 
For a domain 
$\Omega\subset \CC$, we say that an holomorphic function $f$ on $\Omega$ satisfies a finite exponential type estimate
in $\Omega$ if it satisfies (\ref{eq:finite_type_estimate}) for $s\in \Omega$.

\medskip

A fundamental result by Weierstrass is that any meromorphic function in $\CC$ is the quotient of two entire functions 
in $\CC$. A meromorphic function in $\CC$ is of finite order if it is the quotient of two entire functions of 
finite order. The space of meromorphic functions of finite order is a field. More precisely, the space of meromorphic 
functions of order $\leq d$ is a field. 

\medskip

The \textit{convergence exponent} $0\leq \rho_1\leq +\infty$, of a positive 
divisor $D=\sum_\rho n_\rho .\rho$ is the infinimum of the exponents $\alpha >0$ such that 
$$
\sum_{\rho \not= 0} \frac{n_\rho}{|\rho|} <+\infty
$$
(see \cite{Boa} Definition 2.5.2 p.14)
The smallest positive integer $\alpha >0$ for which we have convergence is denoted by $p+1$ and $p$ is the 
genus of the divisor (see \cite{Boa} Definition 2.5.4 p.14).
The convergence exponent of an entire function $f$ is the convergence exponent of $\Div(f)$.
We have $\rho_1(f)\leq \rho(f)$ (see \cite{Boa} Theorem 2.5.18 p.17).

\medskip

If $D=\sum_\rho n_\rho .\rho$ is a divisor of finite convergence exponent $\rho_1$ and genus $p$, the canonical 
product
$$
g(s)=\prod_{\rho\not= 0} E_p(s/\rho)^{n_\rho}
$$
has order $\rho_1$ (see \cite{Boa} Theorem 2.6.5 p.19).

\medskip

\textbf{Acknowledgements.} I thank you Professors A. Aycock and Ch. Berg for feedback and corrections.

\end{document}